\newcommand{\ignore}[1]{}
\newcommand{\hide}[1]{}
\DeclareMathOperator{\Li}{Li}
\newcommand{\F}{\mathbb F}
\newcommand{\Q}{\mathbb Q}
\newtheorem{dummy}{Dummy}
\newtheorem{lemma}[dummy]{Lemma}
\newtheorem{theorem}[dummy]{Theorem}
\newtheorem{cor}[dummy]{Corollary}
\theoremstyle{definition}
\newtheorem{question}[dummy]{Question}
\theoremstyle{remark}
\newtheorem{rem}[dummy]{Remark}
\newtheorem*{rem*}{Remark to ourselves}
\begin{document}%_______________________________________

\bibliographystyle{amsalpha}

\author{Marina Avitabile}
\email{marina.avitabile@unimib.it}
\address{Dipartimento di Matematica e Applicazioni\\
  Universit\`a degli Studi di Milano - Bicocca\\
 via Cozzi 55\\
  I-20125 Milano\\
  Italy}
\author{Sandro Mattarei}
\email{smattarei@lincoln.ac.uk}
\address{School of Mathematics and Physics \\
University of Lincoln \\
Brayford Pool
Lincoln, LN6 7TS\\
United Kingdom}
\title[A generalized truncated logarithm]{A generalized truncated logarithm}

\subjclass[2010]{Primary 33E50; secondary 39B52, 05A10, 33C45}
%    The 2010 edition of the Mathematics Subject Classification is
%    now available.  If you are citing a classification from the
%    new scheme, use the following input coding instead.
%\subjclass[2010]{Primary }
\keywords{truncated logarithm; polylogarithm; Laguerre polynomial; functional equation; Jacobi polynomial}

\begin{abstract}
We introduce a generalization $G^{(\alpha)}(X)$ of the truncated logarithm $\pounds_1(X)=\sum_{k=1}^{p-1}X^k/k$ in characteristic $p$,
which depends on a parameter $\alpha$.
The main motivation of this study is $G^{(\alpha)}(X)$ being an inverse, in an appropriate sense, of a parametrized generalization of the truncated exponential
given by certain Laguerre polynomials.
Such Laguerre polynomials play a role in a {\em grading switching} technique for non-associative algebras,
previously developed by the authors, because they satisfy a weak analogue of the functional equation
$\exp(X)\exp(Y)=\exp(X+Y)$ of the exponential series.
We also investigate functional equations satisfied by $G^{(\alpha)}(X)$ motivated by known functional equations for $\pounds_1(X)=-G^{(0)}(X)$.
\end{abstract}

\maketitle

\section{Introduction}\label{sec:intro}

In this paper we investigate a parametrized analogue of the logarithmic function in prime characteristic.
This modified version of the logarithm arises as the inverse (in an appropriate sense) of a parametrized modular analogue
of the exponential which was introduced in~\cite{AviMat:Laguerre}.
To provide motivation to this work we give here only a summary introduction to the latter, referring the interested reader
%to a more detailed description
to~\cite{AviMat:Laguerre,AviMat:gradings} for a detailed description.

The usefulness of the exponential function in various part of mathematics ultimately stems from its property
$e^x\cdot e^y=e^{x+y}$, sometimes disguised as its essentially equivalent differential formulation $(d/dx)e^x=e^x$.
In particular, as a means of connecting additive and multiplicative structures the exponential has always played a role in Lie theory,
for local reconstruction of a Lie group from its Lie algebra.
The essence of this application is that, under appropriate conditions, {\em the exponential of a derivation (of a non-associative algebra) is an automorphism}.
The simplest setting to see this mechanism in action is the following:
if $D$ is a nilpotent derivation of a
non-associative algebra $A$ over a field of characteristic zero, then the finite sum
$\exp(D)=\sum_{k=0}^{\infty}D^k/k!$ defines an automorphism of $A$.
This can be viewed as a formal consequence of the functional equation $\exp(X)\cdot\exp(Y)=\exp(X+Y)$ satisfied by the exponential series
$\exp(X)=\sum_{k=0}^{\infty}X^k/k!$, in the ring of formal power series $\Q[[X,Y]]$.

Over fields prime characteristic $p$, the {\em truncated exponential} $E(X)=\sum_{k=0}^{p-1}X^k/k!$
can be used as a substitute for the exponential series, to some extent.
In particular, if $p$ is odd and the derivation $D$ satisfies $D^{(p+1)/2}=0$, then $E(D)$ is an automorphism,
but the weaker condition $D^p=0$ is insufficient to this goal.
This apparent shortcoming of $\exp(D)$ was turned into an advantage in the technique of
{\em toral switching} for modular Lie algebras, originally developed by Winter~\cite{Win:toral} in its most basic form.
This fundamental tool in the classification theory of simple modular Lie algebras
has later undergone substantial generalizations by Block and Wilson~\cite{BlWil:rank-two},
until reaching its most complete expression in work of Premet~\cite{Premet:Cartan}.
The gist of the technique is that maps similar to exponentials of derivations
(of the form $E(D)$ with $D$ a certain inner derivation, or more general constructions
in~\cite{Premet:Cartan}) are used to produce a new torus from a given torus
(with certain properties including having maximal dimension).
Because those maps are not automorphims, the new torus may
have very different properties from the original torus, and better suited to certain purposes.

Now, any torus of a Lie algebra induces a grading given by the corresponding eigenspace
decompositions with respect to the adjoint action (called a {\em generalized root space decomposition}).
Therefore, in very broad terms toral switching may be viewed as a technique to pass from a given grading of a Lie algebra to another one.
In such generality one may wonder whether some kind of exponential could be used
to pass from a given grading to another one without assuming that either of them is associated to some torus.
In this spirit a {\em grading switching} technique
for non-associative algebras was developed in~\cite{Mat:Artin-Hasse}
for nilpotent derivations, and then extended in~\cite{AviMat:Laguerre} for arbitrary derivations.
Despite being motivated and strongly inspired by the toral switching technique, to attain greater generality
and allow applications such as those in~\cite{Mat:Artin-Hasse,AviMat:-1,AviMat:A-Z,AviMat:gradings}, those papers focus
on the algebraic property which makes some generalized exponential of a derivation map a grading of the algebra to another grading.
That crucial property boils down to the generalized exponential employed satisfying an appropriately weakened version
of the functional equation $\exp(X)\cdot\exp(Y)=\exp(X+Y)$.

In fact, the generalized exponential employed in~\cite{Mat:Artin-Hasse} was the Artin-Hasse exponential series
\[
E_p(X):=\Bigl(\sum_{i=0}^{\infty}X^{p^i}/p^i\Bigr)=\prod_{i=0}^{\infty}\exp(X^{p^i}/p^i),
\]
whose importance in $p$-adic analysis and number theory stems from all its coefficients being $p$-integral.
In particular, by viewing its coefficients modulo $p$ one can regard $E_p(X)\in\F_p[[X]]$, where $\F_p$ is the field of $p$ elements.
As such, $E_p(X)$ was shown in~\cite{Mat:Artin-Hasse} to have the property that all terms of
$
E_p(X)\cdot E_p(Y)/E_p(X+Y)\in\F_p[[X,Y]]
$
have degree multiples of $p$.
This weak version of the fundamental functional equation for $\exp(X)$ is precisely what make grading switching based on $E_p(D)$ work,
for appropriate gradings and nilpotent derivations $D$.
It was shown in~\cite{Mat:exponential} that this weak functional equation actually characterizes $E_p(X)$ in $\F_p[[X]]$
up to certain natural variations.

Nilpotence of the derivation $D$ was required in~\cite{Mat:Artin-Hasse} for $E_p(D)$ to make sense algebraically.
However, the resulting grading switching matched only a rather special case of the classical toral switching.
This limitation was removed in~\cite{AviMat:Laguerre}
by introducing certain (generalized) Laguerre polynomials as substitutes for exponentials.
We defer discussing the connection with the classical Laguerre polynomials $L_n^{(\alpha)}(X)$ to Subsection~\ref{subsec:Laguerre}, but those of interest to us,
once regarded as polynomials in characteristic $p$, take the form
\[
L_{p-1}^{(\alpha)}(X)
=(1-\alpha^{p-1})
\sum_{k=0}^{p-1}\frac{X^k}{(1+\alpha)(2+\alpha)\cdots(\alpha+k)}.
\]
Here $\alpha$ is a parameter, which in the application to grading switching relates to an eigenvalue of the derivation $D$ in a certain way.
In the special case where the derivation $D$ is nilpotent its only eigenvalue is zero, and
$L_{p-1}^{(0)}(X)$ equals the truncated exponential $E(X)$.
The reader who might be puzzled by how a construction using certain Laguerre {\em polynomials} can extend
an earlier one based on Artin-Hasse {\em power series} will find an explanation of this connection in~\cite[Proposition~6]{AviMat:gradings}.

Again, the crucial property which makes $L_{p-1}^{(\alpha)}(X)$ work as a replacement for an exponential
in the grading switching application in~\cite{AviMat:Laguerre} is that it satisfies a weak version
of the fundamental functional equation for $\exp(X)$, its most important feature being that all terms
$
L_{p-1}^{(\alpha)}(X)\cdot L_{p-1}^{(\beta)}(Y)/L_{p-1}^{(\alpha+\beta)}(X)\in\F_p(\alpha,\beta)[[X,Y]]
$
have degree multiples of $p$.
The more precise version of this property given in Theorem~\ref{thm:functional-Laguerre-simplified} below is actually required,
and an even more precise one as in~\cite[Proposition~2]{AviMat:Laguerre} to attain full generality.
We skip those details in this introduction, but only mention that the functional equation in more precise form
is actually a congruence involving moduli such as $X^p-(\alpha^p-\alpha)$ and $Y^p-(\beta^p-\beta)$.
In fact, in this context $L_{p-1}^{(\alpha)}(X)$ is only of interest when regarded modulo $X^p-(\alpha^p-\alpha)$.

We come now to the goal of this paper, which is investigating an appropriate compositional inverse
$G^{(\alpha)}(X)$ for the polynomial $L_{p-1}^{(\alpha)}(X)$.
We devote Section~\ref{sec:prel} to a detailed description of our results, along with the necessary technical preliminaries,
hence we limit ourselves to a succint outline in this Introduction.
Guidance from the functional properties of $L_{p-1}^{(\alpha)}(X)$ suggests that the correct interpretation
of being a left compositional inverse is being a polynomial of degree less than $p$ in $X$, and with coefficients depending on the parameter $\alpha$,
satisfying
\[
G^{(\alpha)}\bigl(L_{p-1}^{(\alpha)}(X)\bigr)\equiv X \pmod{X^p-(\alpha^p-\alpha)}.
\]
General reasons imply that there is precisely one polynomial $G^{(\alpha)}(X)$ satisfying these requirements.
In Theorem~\ref{thm:inverse} we find an explicit expression for it, but here we limit ourselves to pointing out that
specialized to $\alpha=0$ it becomes $G^{(0)}(X)=-\sum_{k=1}^{p-1}X^k/k$, which is a truncated version of the series for $\log(1-X)$.
In terms of the standard notation $\pounds_d(X)=\sum_{k=1}^{p-1}X^k/k^d$ for the {\em finite polylogarithms} we then have
$G^{(0)}(X)=-\pounds_1(X)$.
The above equation relating $L_{p-1}^{(\alpha)}(X)$ and $G^{(\alpha)}(X)$, which reads
$-\pounds_1\bigl(E(X)\bigr)\equiv X\pmod{X^p}$
when $\alpha=0$, where $E(X)$ is the truncated exponential, may be viewed as an analogue of
$\log\bigl(\exp(X)\bigr)=X$.
This depends on the fact that the {\em truncated logarithm} $\pounds_1(X)$ (always viewed in characteristic $p$ here) satisfies $\pounds_1(1-X)=\pounds_1(X)$,
a functional equation without analogue in characteristic zero, as we explain in the discussion which follows Theorem~\ref{thm:inverse}.
The polynomial $G^{(\alpha)}(X)$ is also a right compositional inverse of $L_{p-1}^{(\alpha)}(X)$ in an appropriate sense,
made explicit in Corollary~\ref{cor:inverse}.
The proofs of Theorem~\ref{thm:inverse} and Corollary~\ref{cor:inverse} occupy Section~\ref{sec:proofs}.

The coefficients of $G^{(\alpha)}(X)$ as given in Theorem~\ref{thm:inverse} are rational expressions in $\alpha$ with numerators $1$.
We devote Section~\ref{sec:roots} to an investigation of their denominators, leading to an explicit description of their factorizations, as polynomials in $\alpha$.
In particular, it will turn out that their roots are nonzero elements of the prime field $\F_p$, as stated in Theorem~\ref{thm:inverse-more},
but Theorem~\ref{thm:roots} allows explicit calculation of which roots occur and with which multiplicities.

This information will be crucial in the next part of the paper, which studies functional equations for $G^{(\alpha)}(X)$.
This investigation is motivated by functional equations for the truncated logarithm $\pounds_1(X)=-G^{(0)}(X)$, which besides the already mentioned
$\pounds_1(1-X)=\pounds_1(X)$ satisfies $\pounds_1(X)=-X^p\pounds_1(1/X)$.
In Theorem~\ref{thm:reciprocal} we generalize the latter to an equation for $G^{(\alpha)}(X)$,
which involves an evaluation of $G^{(\alpha)}$ on one side and one of $G^{(-\alpha)}$ on the other.
It remains unclear whether a similar generalization exists for $\pounds_1(1-X)=\pounds_1(X)$, and even which general form it may take.

More interestingly, the functional equation $\log(xy)=\log(x)+\log(y)$ for the ordinary logarithmic function,
which matches the fundamental functional equation for the exponential function,
has a well-known analogue for the truncated logarithm $\pounds_1(X)$ in the polynomial ring $\F_p[X,Y]$,
except that the latter involves four terms, see Equation~\eqref{eq:4-term}.
We are unfortunately unable to generalize this to a corresponding equation for $G^{(\alpha)}(X)$, but in Theorem~\ref{thm:powers}
we produce a version for $G^{(\alpha)}(X)$ of a consequence of $\log(xy)=\log(x)+\log(y)$, namely $\log(x^h)=h\log(x)$ for integer $h$.
Our equation, which is actually a congruence with respect to an appropriate modulus,
involves an evaluation of $G^{(\alpha)}$ on one side and one of $G^{(h\alpha)}$ on the other, for $0<h<p$.
We discuss these functional equations extensively in Subsection~\ref{subsec:functional},
and prove our results in Section~\ref{sec:proofs_B}.

In the final Section~\ref{sec:Jacobi} we relate polynomials used to describe the coefficients of $G^{(\alpha)}(X)$ in Theorem~\ref{thm:inverse}
to certain Jacobi polynomials, showing how some of the properties of the former could alternately be deduced
by appropriate manipulation of known equations for the latter.

\section{Motivations and statements of the results}\label{sec:prel}

In this section we state our results in more precise form than the
rough description given in Section~\ref{sec:intro}.
In Subsection~\ref{subsec:Laguerre} we recall certain generalized Laguerre polynomials $L_{p-1}^{(\alpha)}(X)$ which play a role in the {\em grading switching}
described in~\cite{AviMat:Laguerre} because of their exponential-like property.
That crucial property is a functional equation, in the form of a congruence,
which we quote from~\cite{AviMat:Laguerre} in Theorem~\ref{thm:functional-Laguerre-simplified}.
In Subsection~\ref{subsec:inverting} we take this as a motivation to produce a certain compositional inverse $G^{(\alpha)}(X)$ of $L_{p-1}^{(\alpha)}(X)$,
in a suitable sense, which we describe in Theorem~\ref{thm:inverse}, and should have logarithm-like properties.
The short Subsection~\ref{subsec:more} aims at a better understanding of the coefficients of $G^{(\alpha)}(X)$,
namely their factorizations as rational functions of $\alpha$.
That detailed information is essential for the results of Subsection~\ref{subsec:functional}, which discusses functional equations for the truncated logarithm,
and extends a couple of them to equations for $G^{(\alpha)}(X)$.

\subsection{Some generalized Laguerre polynomials}\label{subsec:Laguerre}
The classical (generalized) Laguerre polynomial of degree $n \geq 0$ is defined as
\[
L_n^{(\alpha)}(X)=\sum_{k=0}^n\binom{\alpha+n}{n-k}
\frac{(-X)^k}{k!},
\]
where $\alpha$ is a parameter, usually taken in the complex numbers.
However, we may also view $L_n^{(\alpha)}(X)$ as a polynomial with rational coefficients in the two
indeterminates $\alpha$ and $X$, hence in the polynomial ring $\Q[\alpha,X]$.

Now fix a prime $p$.
If $0\le n<p$ then all coefficients of $L_n^{(\alpha)}(X)$ are $p$-integral, and hence can be viewed modulo $p$.
We are essentially only interested in the case $n=p-1$, where $L_{p-1}^{(\alpha)}(X)$ modulo $p$
can be considered as a generalization of the {\em truncated exponential}
$E(X)=\sum_{k=0}^{p-1}X^k/k!$
which we mentioned in the Introduction.
In fact, we have
$L_{p-1}^{(0)}(X)\equiv E(X)\pmod{p}$
because
$\binom{p-1}{k}\equiv\binom{-1}{k}=(-1)^k\pmod{p}$
for $0\le k<p$.

It is notationally convenient to work directly in characteristic $p$,
and so we will regard $L_{p-1}^{(\alpha)}(X)$ as having coefficients in the field $\F_p$ of $p$ elements,
thus viewing
\begin{equation}\label{eq:lag_p-1}
L_{p-1}^{(\alpha)}(X)
=\sum_{k=0}^{p-1}\binom{\alpha-1}{p-1-k}\frac{(-X)^k}{k!}
\in\F_p[\alpha,X].
\end{equation}
Hence $L_{p-1}^{(0)}(X)=E(X)$, if we regard the truncated exponential $E(X)$ as a polynomial in $\F_p[X]$ as well.
Taking advantage of working in characteristic $p$, and in particular of the congruence $k!(p-1-k)!\equiv(-1)^{k-1}\pmod{p}$,
we find simpler expressions for $L_{p-1}^{(\alpha)}(X)$
by expressing the binomial coefficients involved in terms of {\em Pochhammer symbols}
$(x)_m:=x(x-1)\cdots(x-m+1)$,
which are defined for $x$ in any commutative ring and $m$ a nonnegative integer, with the natural convention $(x)_0:=1$.
Thus, we have
\[
L_{p-1}^{(\alpha)}(X)
=-\sum_{k=0}^{p-1}(\alpha-1)_{p-1-k}\cdot X^k
=(1-\alpha^{p-1})
\sum_{k=0}^{p-1}\frac{X^k}{(\alpha+k)_{k}}.
\]
The latter expression, which is obtained using $(\alpha-1)_{p-1}=\alpha^{p-1}-1$ in $\F_p[\alpha]$,
emphasizes the role of $L_{p-1}^{(\alpha)}(X)$ as a generalization of the more familiar $E(X)$.
In particular, note that as a polynomial in $X$ only, its constant term is
$L_{p-1}^{(\alpha)}(0)=1-\alpha^{p-1}$.

The Laguerre polynomials $L_{p-1}^{(\alpha)}(X)$ satisfy a congruence which bears a strong analogy with the
functional equation $\exp(X)\exp(Y)=\exp(X+Y)$ of the classical exponential.
More precisely, viewing the functional equation for the exponential as an identity in the ring of power series
$\Q[[X,Y]]$, its direct consequence $\exp(X)\exp(Y)\equiv\exp(X+Y)\pmod{(X^p,Y^p)}$ in $\Q[[X,Y]]$
is equivalent to $E(X)E(Y)=E(X+Y)\pmod{(X^p,Y^p)}$ in the polynomial ring $\Q[X,Y]$.
Because no denominator in this equation is a multiple of $p$, the equation can be viewed modulo $p$, that is,
in the polynomial ring $\F_p[X,Y]$.
As such, it generalizes as follows for $L_{p-1}^{(\alpha)}(X)$.

\begin{theorem}[{\cite[Proposition~2]{AviMat:Laguerre}}]\label{thm:functional-Laguerre-simplified}
Let $\alpha,\beta,X,Y$ be indeterminates over $\F_p$.
There exist rational expressions $c_i(\alpha,\beta)\in\F_p(\alpha,\beta)$, such that
\[
L_{p-1}^{(\alpha)}(X)\cdot
L_{p-1}^{(\beta)}(Y)
\equiv
L_{p-1}^{(\alpha+\beta)}(X+Y)\cdot
\Bigl(
c_0(\alpha,\beta)+\sum_{i=1}^{p-1}c_i(\alpha,\beta)X^iY^{p-i}
\Bigr)
\]
in $\F_p(\alpha,\beta)[X,Y]$, modulo the ideal generated by
$X^p-(\alpha^p-\alpha)$
and
$Y^p-(\beta^p-\beta)$.
\end{theorem}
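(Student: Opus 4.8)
The plan is to reduce the statement to the vanishing of a single derivation. Write $\pi_\gamma:=\gamma^p-\gamma$ and pass to the quotient ring $S:=\F_p(\alpha,\beta)[X,Y]/(X^p-\pi_\alpha,\,Y^p-\pi_\beta)$. Since $X^p-\pi_\alpha=(X-\alpha)^p+\alpha$ is irreducible over $\F_p(\alpha,\beta)$ ($\alpha$ not being a $p$-th power there), and likewise $Y^p-\pi_\beta$ over the resulting extension, $S$ is a field, purely inseparable of degree $p^2$ over $\F_p(\alpha,\beta)$, with $\F_p(\alpha,\beta)$-basis the monomials $X^iY^j$ with $0\le i,j\le p-1$. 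Each of $L_{p-1}^{(\alpha)}(X)$, $L_{p-1}^{(\beta)}(Y)$, $L_{p-1}^{(\alpha+\beta)}(X+Y)$ is a nonzero polynomial of degree less than $p$ (its leading coefficient, respectively the coefficient of $X^{p-1}$ in the last case, being $-1$), hence is a unit in $S$; so $R:=L_{p-1}^{(\alpha)}(X)\,L_{p-1}^{(\beta)}(Y)\,L_{p-1}^{(\alpha+\beta)}(X+Y)^{-1}$ is a well-defined element of $S$, and the theorem is precisely the assertion that, in the monomial basis, $R$ is supported on those $X^iY^j$ with $p\mid i+j$ --- equivalently, since $0\le i,j\le p-1$ then forces $i+j\in\{0,p\}$, that $R=c_0+\sum_{i=1}^{p-1}c_iX^iY^{p-i}$ for suitable $c_i\in\F_p(\alpha,\beta)$.

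The point is that the Euler operator $\theta:=X\partial_X+Y\partial_Y$ preserves the defining ideal (as $\theta(X^p-\pi_\alpha)=pX^p=0$, and similarly for $Y$), hence descends to an $\F_p(\alpha,\beta)$-linear derivation of $S$ acting on the basis by $\theta(X^iY^j)=(i+j)X^iY^j$; thus $\ker\theta$ is spanned exactly by the monomials of total degree divisible by $p$, and the theorem becomes the single identity $\theta(R)=0$. Since $\theta$ is a derivation and $S$ is a field, $\theta(R)/R=\theta(L_{p-1}^{(\alpha)}(X))/L_{p-1}^{(\alpha)}(X)+\theta(L_{p-1}^{(\beta)}(Y))/L_{p-1}^{(\beta)}(Y)-\theta(L_{p-1}^{(\alpha+\beta)}(X+Y))/L_{p-1}^{(\alpha+\beta)}(X+Y)$. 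Using the classical derivative formula $\frac{d}{dZ}L_{n}^{(\gamma)}(Z)=-L_{n-1}^{(\gamma+1)}(Z)$, valid modulo $p$ for $n\le p-1$, together with $\theta\bigl(f(X+Y)\bigr)=(X+Y)f'(X+Y)$, one finds $\theta(L_{p-1}^{(\alpha)}(X))=-X\,L_{p-2}^{(\alpha+1)}(X)$ and $\theta(L_{p-1}^{(\alpha+\beta)}(X+Y))=-(X+Y)\,L_{p-2}^{(\alpha+\beta+1)}(X+Y)$. Hence, writing $\phi_\gamma(Z):=Z\,L_{p-2}^{(\gamma+1)}(Z)\cdot L_{p-1}^{(\gamma)}(Z)^{-1}$, one gets $\theta(R)/R=-\phi_\alpha(X)-\phi_\beta(Y)+\phi_{\alpha+\beta}(X+Y)$, so it remains to prove the additive-looking identity $\phi_{\alpha+\beta}(X+Y)=\phi_\alpha(X)+\phi_\beta(Y)$ in $S$.

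The crux, which makes this collapse, is the congruence $Z\,L_{p-1}^{(\gamma+1)}(Z)\equiv\gamma\,L_{p-1}^{(\gamma)}(Z)\pmod{Z^p-\pi_\gamma}$. Granting it, the contiguous relation $L_{p-1}^{(\gamma)}(Z)=L_{p-1}^{(\gamma+1)}(Z)-L_{p-2}^{(\gamma+1)}(Z)$ (immediate from $(1-t)\sum_n L_n^{(\gamma+1)}(Z)t^n=\sum_n L_n^{(\gamma)}(Z)t^n$) gives $Z\,L_{p-2}^{(\gamma+1)}(Z)\equiv\gamma\,L_{p-1}^{(\gamma)}(Z)-Z\,L_{p-1}^{(\gamma)}(Z)=(\gamma-Z)L_{p-1}^{(\gamma)}(Z)\pmod{Z^p-\pi_\gamma}$, so that $\phi_\gamma(Z)=\gamma-Z$ in $\F_p(\gamma)[Z]/(Z^p-\pi_\gamma)$; substituting $Z=X$, $Z=Y$, and $Z=X+Y$ (legitimate since $(X+Y)^p=X^p+Y^p=\pi_\alpha+\pi_\beta=\pi_{\alpha+\beta}$ in $S$) yields $\theta(R)/R=-(\alpha-X)-(\beta-Y)+\bigl((\alpha+\beta)-(X+Y)\bigr)=0$, as needed. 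Finally, the congruence $Z\,L_{p-1}^{(\gamma+1)}(Z)\equiv\gamma\,L_{p-1}^{(\gamma)}(Z)$ is the only genuine computation, and it is elementary: expanding both sides by $L_{p-1}^{(\delta)}(Z)=-\sum_{k=0}^{p-1}(\delta-1)_{p-1-k}Z^k$, replacing $Z^p$ by $\pi_\gamma$ on the left, and using $(\gamma-1)_{p-1}=\gamma^{p-1}-1$ and $(\gamma)_{p-m}=\gamma\,(\gamma-1)_{p-1-m}$, one checks that the two sides agree term by term.
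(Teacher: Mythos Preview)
Your proof is correct and complete. The key identity $Z\,L_{p-1}^{(\gamma+1)}(Z)\equiv\gamma\,L_{p-1}^{(\gamma)}(Z)\pmod{Z^p-\pi_\gamma}$ checks out exactly as you indicate (both sides equal $(\gamma-\gamma^p)-\sum_{k=1}^{p-1}(\gamma)_{p-k}Z^k$), and once this is in hand the logarithmic-derivative computation with the Euler operator $\theta=X\partial_X+Y\partial_Y$ goes through cleanly. The observation that $S$ is a field (so all three Laguerre factors are units and $\theta(R)/R$ is well-defined) and that $\ker\theta$ is exactly the span of monomials of total degree divisible by $p$ is the right structural setup.

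As for comparison: the paper does not prove this theorem at all. It is quoted from~\cite[Proposition~2]{AviMat:Laguerre} as a background result and used without proof (the stronger form in Theorem~\ref{thm:functional-Laguerre-more} is likewise only cited). So you have supplied an independent, self-contained argument. Your approach via the Euler derivation is notably clean: it replaces any direct coefficient-by-coefficient bookkeeping with a single vanishing condition $\theta(R)=0$, and reduces that in turn to the elementary congruence above. This is more conceptual than a bare-hands expansion, and it transparently explains \emph{why} only monomials of total degree $0$ or $p$ survive. One small remark: your parenthetical ``$\alpha$ not being a $p$-th power there'' is slightly imprecise---what you need is that $\pi_\alpha=\alpha^p-\alpha$ is not a $p$-th power in $\F_p(\alpha,\beta)$, which holds because $\alpha\notin\F_p(\alpha^p,\beta^p)$---but the conclusion is correct.
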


To be precise, \cite[Proposition~2]{AviMat:Laguerre} is actually stronger than Theorem~\ref{thm:functional-Laguerre-simplified},
and its formulation slightly more complicated, as it provides finer control
over the rational expressions $c_i(\alpha,\beta)$, which is needed in some applications.
We will return to this subtlety in Subsection~\ref{subsec:more}.

The crucial property of $L_{p-1}^{(\alpha)}(X)$ which allows it to play a role of an exponential in the grading switching described in~\cite{AviMat:Laguerre}
is that the factor in parentheses at the right-hand side, as well as the moduli, have only terms of total degree a multiple of $p$.

\subsection{Inverting $L_{p-1}^{(\alpha)}(X)$}\label{subsec:inverting}
The main goal of this paper is producing a left compositional inverse, and then a corresponding right inverse, of the polynomial
$L_{p-1}^{(\alpha)}(X)$ in an appropriate sense.
Because of Theorem~\ref{thm:functional-Laguerre-simplified}, in the motivating applications to algebra gradings
the Laguerre polynomial $L_{p-1}^{(\alpha)}(X)$ is of interest only regarded modulo $X^p-(\alpha^p-\alpha)$,
this dictates the context of the desired inverse.

We quickly dispose of the case of characteristic $p=2$, where $L_{1}^{(\alpha)}(X)=(1+\alpha)+X$
and hence
$L_{1}^{(\alpha)}\bigl(L_{1}^{(\alpha)}(X)\bigr)=X$
in $\F_{2}[\alpha,X]$.
Hence $L_{1}^{(\alpha)}(X)$ is its own inverse in this case, and in a strong sense.
For the rest of the paper we make the blanket assumption that $p$ is odd.

To describe the coefficients of the desired inverse of $L_{p-1}^{(\alpha)}(X)$, as a polynomial in $X$,
we introduce a family of polynomials $b_{r,s}(\alpha)\in\F_{p}[\alpha]$ depending on integers $0<r,s<p$:
\begin{equation}\label{eq:b(ra,sa)}
b_{r,s}(\alpha):=
\sum_{k=0}^{p-1}(-r/s)^k\binom{r\alpha -1}{p-1-k}\binom{s \alpha-1}{k}.
\end{equation}
Because $\binom{-1}{k}=(-1)^k$ for $k\ge 0$, for $r+s\neq p$ the polynomials
$b_{r,s}(\alpha)$ have constant term
$b_{r,s}(0)=\sum_{k=0}^{p-1}(-1)^{k} (r/s)^k=1$ (in $\F_p$).
More generally, for any positive integer $a$
the expression $b_{r,s}(a)$ equals the coefficient of $X^{p-1}$ in the polynomial
 %$r^{p-1}(1+X/r)^{ra-1}(1-X/s)^{sa-1}\in\F_p[X]$,
 %or in fact the polynomial
$(1+X/r)^{ra-1}(1-X/s)^{sa-1}\in\F_p[X]$,
having noted that $r^{p-1}\equiv 1\pmod{p}$.
This fact will be used in the proof of Theorem~\ref{thm:roots} to prove certain properties of the polynomials $b_{r,s}(\alpha)$.
For now it shows that $b_{r,p-r}(\alpha)\in\F_p[\alpha]$ is the zero polynomial, as it has degree at most $p-1$ by definition, but
$b_{r,p-r}(a)=0$ for any positive integer $a$
because
$(1+X/r)^{pa-2}=(1+X/r)^{p-2}\cdot(1+X^p/r)^{a-1}\in\F_p[X]$
has no term of degree $p-1$.

We will conveniently allow ourselves to interpret the integers $r$ and $s$ as elements of $\F_p^\ast$,
and write $b_{r,-r}(\alpha)$ instead of  $b_{r,p-r}(\alpha)$, for example.
With this convention, for $t\in\F_p^\ast$ we plainly have
$b_{rt,st}(\alpha)=b_{r,s}(t\alpha)$.
This allows us to assume $r=1$ in studying such polynomials and, in fact, only the polynomials $b_{1,s}(\alpha)$
are required in stating the following result.

The following result provides the desired left inverse of $L_{p-1}^{(\alpha)}(X)$.

\begin{theorem}\label{thm:inverse}
Let $\F_{p}(\alpha)$  be the field of the rational expressions in the indeterminate $\alpha$.
There is a unique  polynomial $G^{(\alpha)}(X)$ of degree less than $p$ in $\F_{p}(\alpha)[X]$ such that
\[
G^{(\alpha)}\bigl(L_{p-1}^{(\alpha)}(X)\bigr)\equiv X \pmod{X^p-(\alpha^p-\alpha)},
\]
and is given by
\[
G^{(\alpha)}(X)=-\sum_{k=1}^{p-1}\frac{1}{k} \frac{X^{k}}{\prod_{s=1}^{k-1}b_{1,s}(\alpha)}.
\]
\end{theorem}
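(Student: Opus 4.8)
The plan is to separate the soft part---uniqueness, and existence of \emph{some} polynomial with the stated property---from the verification of the explicit formula. For the soft part, set $R:=\F_p(\alpha)[X]\big/\bigl(X^p-(\alpha^p-\alpha)\bigr)$. In characteristic $p$ we have $X^p-(\alpha^p-\alpha)=(X-\alpha)^p+\alpha$, a $p$-th power minus $-\alpha$, and $-\alpha$ is not a $p$-th power in $\F_p(\alpha)$ (because $\F_p(\alpha)^p=\F_p(\alpha^p)$ does not contain $\alpha$); hence this polynomial is irreducible over $\F_p(\alpha)$ and $R$ is a purely inseparable field extension of degree $p$. The image $\ell$ of $L_{p-1}^{(\alpha)}(X)$ in $R$ is represented by a polynomial of degree less than $p$ whose coefficient of $X$ equals $(1-\alpha^{p-1})/(\alpha+1)\neq 0$, so $\ell\notin\F_p(\alpha)$; since $[R:\F_p(\alpha)]=p$ is prime, $\F_p(\alpha)(\ell)=R$. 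Thus $X$ is a polynomial in $\ell$, and reducing modulo the minimal polynomial of $\ell$ (of degree $p$) gives one of degree less than $p$; two such polynomials differ by one of degree less than $p$ vanishing at $\ell$, hence coincide. This yields existence and uniqueness of $G^{(\alpha)}(X)$ and reduces Theorem~\ref{thm:inverse} to showing that the displayed polynomial, say $\widetilde G(X)$, satisfies $\widetilde G\bigl(L_{p-1}^{(\alpha)}(X)\bigr)\equiv X\pmod{X^p-(\alpha^p-\alpha)}$.

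For this I would work in $R$, where $X^p=\alpha^p-\alpha$, and exploit the shape of the coefficients of $\widetilde G$. Since $\bigl(L_{p-1}^{(\alpha)}\bigr)'(X)$ is a nonzero polynomial of degree $p-2$, it is a unit of $R$, so by the chain rule it is enough to prove $\frac{d}{dX}\widetilde G\bigl(L_{p-1}^{(\alpha)}(X)\bigr)\equiv 1$, that is,
\[
\Bigl(\sum_{j=0}^{p-2}\frac{L_{p-1}^{(\alpha)}(X)^{j}}{\prod_{s=1}^{j}b_{1,s}(\alpha)}\Bigr)\cdot\bigl(L_{p-1}^{(\alpha)}\bigr)'(X)\equiv -1\pmod{X^p-(\alpha^p-\alpha)},
\]
together with the single scalar check that the constant term of the reduction of $\widetilde G\bigl(L_{p-1}^{(\alpha)}(X)\bigr)$ modulo $X^p-(\alpha^p-\alpha)$ vanishes (alternatively one may specialise to $\alpha=0$, where $b_{1,s}(0)=1$ and the claim reads $-\pounds_1\bigl(E(X)\bigr)\equiv X\pmod{X^p}$). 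Passing to the derivative is convenient because $\widetilde G'(X)=-\sum_{j=0}^{p-2}X^{j}/\prod_{s=1}^{j}b_{1,s}(\alpha)$ has ``hypergeometric'' coefficients, the ratio of consecutive ones being $1/b_{1,j}(\alpha)$, which is well suited to induction on $j$.

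To prove the displayed congruence I would expand the powers $L_{p-1}^{(\alpha)}(X)^{j}$, reduce modulo $X^p-(\alpha^p-\alpha)$, and rewrite $L_{p-1}^{(\alpha)}(X)$ as in \eqref{eq:lag_p-1} and each $b_{1,s}(\alpha)$ through the coefficient-extraction description recorded just after \eqref{eq:b(ra,sa)}, namely as the coefficient of $X^{p-1}$ in $(1+X)^{\alpha-1}(1-X/s)^{s\alpha-1}$. The products $L_{p-1}^{(\alpha)}(X)^{j}$ modulo this modulus are exactly the situation controlled by the functional equation of Theorem~\ref{thm:functional-Laguerre-simplified} (and its sharper version in \cite{AviMat:Laguerre}), which I expect to be the natural input. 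After this bookkeeping the statement should collapse to a combinatorial identity saying that the partial products $\prod_{s=1}^{k-1}b_{1,s}(\alpha)$ are precisely the denominators that accumulate when $L_{p-1}^{(\alpha)}$ is inverted modulo $X^p-(\alpha^p-\alpha)$. Pinning down these normalising factors is the heart of the argument and the step I expect to be the main obstacle; everything preceding it is formal.
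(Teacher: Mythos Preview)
Your soft part is correct and neater than the paper's treatment: the paper never isolates existence and uniqueness abstractly but obtains both at once by solving a linear system for the coefficients. Observing that $R=\F_p(\alpha)[X]/\bigl(X^p-(\alpha^p-\alpha)\bigr)$ is a field of prime degree over $\F_p(\alpha)$, so that any nonconstant element generates it, is a clean shortcut.

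The genuine gap is the verification of the explicit formula, which you yourself flag as the ``main obstacle'' and do not carry out. The missing ingredient is a closed formula for the powers of $L_{p-1}^{(\alpha)}(X)$ in $R$, and this is exactly where the $b_{1,s}(\alpha)$ come in. The paper proves (Lemma~\ref{lemma:product}, by direct expansion and coefficient comparison rather than merely invoking Theorem~\ref{thm:functional-Laguerre-simplified}) that
\[
L_{p-1}^{(r\alpha)}(rX)\cdot L_{p-1}^{(s\alpha)}(sX)\equiv b_{r,s}(\alpha)\,L_{p-1}^{((r+s)\alpha)}\bigl((r+s)X\bigr)\pmod{X^p-(\alpha^p-\alpha)}
\]
for $r+s\neq p$, whence by induction
\[
\bigl(L_{p-1}^{(\alpha)}(X)\bigr)^{j}\equiv\Bigl(\prod_{s=1}^{j-1}b_{1,s}(\alpha)\Bigr)\cdot L_{p-1}^{(j\alpha)}(jX)\pmod{X^p-(\alpha^p-\alpha)}.
\]
The denominators in $\widetilde G$ are designed precisely to cancel these products, so that
\[
\widetilde G\bigl(L_{p-1}^{(\alpha)}(X)\bigr)\equiv -\sum_{j=1}^{p-1}\frac{1}{j}\,L_{p-1}^{(j\alpha)}(jX),
\]
and since both sides have degree less than $p$ this is an equality of polynomials. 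Equating coefficients of $X^k$ and expanding the Pochhammer symbols $(j\alpha-1)_{p-1-k}$ reduces everything, after a triangular change of equations (the paper's \eqref{eq:system}--\eqref{eq:system_simpler}), to the power-sum identity $\sum_{j\in\F_p^\ast}j^h=-[\,p-1\mid h\,]$. There is no further ``combinatorial identity'' to discover; the whole point is that once you have the power formula the denominators disappear.

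Two minor comments on your derivative strategy. First, it does not shortcut anything: after feeding in the power formula above, your displayed congruence becomes $-\sum_{k=1}^{p-1}\bigl(L_{p-1}^{(k\alpha)}\bigr)'(kX)\equiv -1$, which is the derivative of the identity the paper proves directly and requires the same power-sum computation. Second, your parenthetical alternative for the constant-term check---specialise to $\alpha=0$---is not valid: after the derivative step the difference $\widetilde G\bigl(L_{p-1}^{(\alpha)}(X)\bigr)-X$ in $R$ is known only to lie in $\F_p(\alpha)$, and a rational function in $\alpha$ that vanishes at $\alpha=0$ need not vanish identically.
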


The denominator $\prod_{s=1}^{k-1}b_{1,s}(\alpha)$ in the expression for $G^{(\alpha)}(X)$ should be read as $1$ when $k=1$.

We now describe in which sense Theorem~\ref{thm:inverse} generalizes a truncated version modulo $X^p$ of the equation
$\log\bigl(\exp(X)\bigr)=X$ in $\Q[[X]]$.
Setting $\alpha=0$ we have $L_{p-1}^{(0)}(X)=E(X)=\sum_{k=0}^{p-1}X^k/k!$, the truncated exponential in characteristic $p$, and
$G^{(0)}(X)=-\sum_{k=1}^{p-1}X^k/k=-\pounds_1(X)$.
Here $\pounds_d(X)=\sum_{k=1}^{p-1}X^k/k^d$ denote the {\em finite polylogarithms}.
They are truncated versions of the power series
$\Li_d(X)=\sum_{k=1}^{\infty}X^k/k^d$,
which serve to define the ordinary {\em polylogarithms} over the complex numbers in a neighbourhood of zero.
In particular,
$\Li_1(X)=-\log(1-X)$
is closely related to the ordinary logarithmic series.

For composition of formal power series to make sense the absence of a constant term is required,
and hence the equation $\log\bigl(\exp(X)\bigr)=X$ in $\Q[[X]]$
should really be interpreted as
$\log\bigl(1+(\exp(X)-1)\bigr)=X$.
Adopting polylogarithmic notation this reads
$-\Li_1\bigl(1-\exp(X))\bigr)=X$.
Viewing this equation modulo $X^p$, and then modulo $p$, it implies
$-\pounds_1\bigl(1-E(X))\bigr)\equiv X\pmod{X^p}$ in $\F_p[[X]]$, or actually in the polynomial ring $\F_p[X]$.
However, setting $\alpha=0$ in the congruence of Theorem~\ref{thm:inverse} yields
$-\pounds_1\bigl(E(X))\bigr)\equiv X\pmod{X^p}$ in $\F_p[X]$.
This apparent discrepancy is resolved by the functional equation
$\pounds_1(1-X)=\pounds_1(X)$, which the polynomial $\pounds_1(X)$ satisfies
(see Subsection~\ref{subsec:functional} below for more details).
%(see~\cite{Kontsevich}, for example, or Equation~(13) in~\cite{MatTau:polylog}).
Hence Theorem~\ref{thm:inverse} generalizes a truncated version
of the familiar fact that the logarithm is a left inverse of the exponential function,
modulo an application of the mentioned functional equation for $\pounds_1(X)$.

The left compositional inverse $G^{(\alpha)}(X)$ for $L_{p-1}^{(\alpha)}(X)$ given in Theorem~\ref{thm:inverse} gives rise to a right inverse as follows.

\begin{cor}\label{cor:inverse}
We have
\[
L_{p-1}^{(\alpha)}\bigl(
G^{(\alpha)}(X)
\bigr)
\equiv X\pmod{X^p-L_{p-1}^{(\alpha^p)}(\alpha^p-\alpha)}
\]
in the polynomial ring $\F_{p}(\alpha)[X]$.
\end{cor}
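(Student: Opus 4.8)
The plan is to deduce Corollary~\ref{cor:inverse} from Theorem~\ref{thm:inverse} by a standard left-inverse-implies-right-inverse argument, being careful to track how the modulus transforms under the maps involved. Write $L=L_{p-1}^{(\alpha)}$ and $G=G^{(\alpha)}$, both viewed as polynomials of degree less than $p$ over $\F_p(\alpha)$. By Theorem~\ref{thm:inverse} we have $G(L(X))\equiv X\pmod{X^p-(\alpha^p-\alpha)}$. The idea is to substitute $X\mapsto G(X)$ into this congruence: formally, $G\bigl(L(G(X))\bigr)\equiv G(X)\pmod{(G(X))^p-(\alpha^p-\alpha)}$. To exploit this I would first compute what $(G(X))^p$ is modulo the target modulus $X^p-L_{p-1}^{(\alpha^p)}(\alpha^p-\alpha)$; since raising to the $p$-th power is additive and $\F_p$-linear in characteristic $p$, $(G^{(\alpha)}(X))^p=-\sum_{k=1}^{p-1}\frac{1}{k}\frac{X^{kp}}{\prod_{s}b_{1,s}(\alpha)^p}$, and here is where the constant $L_{p-1}^{(\alpha^p)}(\alpha^p-\alpha)$ should emerge after reducing $X^{kp}$ via the target modulus and simplifying the resulting expression using $b_{1,s}(\alpha)^p=b_{1,s}(\alpha^p)$ together with the identity (recalled in Subsection~\ref{subsec:inverting}) that $b_{r,s}(a)$ is the relevant $X^{p-1}$-coefficient — equivalently that $\prod_{s=1}^{k-1}b_{1,s}(\alpha)$ is exactly the denominator making $G$ the inverse of $L$. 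In other words, one shows that modulo $X^p-L_{p-1}^{(\alpha^p)}(\alpha^p-\alpha)$ one has $(G^{(\alpha)}(X))^p\equiv \alpha^p-\alpha$, which is precisely the congruence needed to make the substitution legitimate.

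With that in hand, set $Y:=L(G(X))$. From the left-inverse relation applied with $X$ replaced by $G(X)$ — valid because, as just shown, $(G(X))^p-(\alpha^p-\alpha)$ vanishes modulo the target modulus — we get $G(Y)\equiv G(X)$ modulo $X^p-L_{p-1}^{(\alpha^p)}(\alpha^p-\alpha)$. Now $G$ is a polynomial of degree less than $p$; I would argue that the map $Z\mapsto G(Z)$ is injective on the quotient ring $\F_p(\alpha)[Z]\big/\bigl(Z^p-c\bigr)$ for the relevant constant $c$. This injectivity follows from Theorem~\ref{thm:inverse} itself: $G$ has the genuine one-sided inverse $L$ on the appropriate quotient, so as a map between the two (one-dimensional-as-free-module-of-rank-$p$) quotient rings it is bijective. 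Hence $G(Y)\equiv G(X)$ forces $Y\equiv X$, i.e. $L_{p-1}^{(\alpha)}\bigl(G^{(\alpha)}(X)\bigr)\equiv X\pmod{X^p-L_{p-1}^{(\alpha^p)}(\alpha^p-\alpha)}$, which is the assertion.

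I expect the main obstacle to be the bookkeeping in the first step: verifying that $(G^{(\alpha)}(X))^p\equiv\alpha^p-\alpha$ modulo $X^p-L_{p-1}^{(\alpha^p)}(\alpha^p-\alpha)$, and, relatedly, pinning down why the constant appearing in the target modulus must be $L_{p-1}^{(\alpha^p)}(\alpha^p-\alpha)$ rather than simply $\alpha^p-\alpha$. The clean conceptual way to see this is to note that $G^{(\alpha)}(X)\bmod\bigl(X^p-(\alpha^p-\alpha)\bigr)$ and $X\bmod\bigl(X^p-(\alpha^p-\alpha)\bigr)$ correspond under the ring isomorphism induced by $L$, so $(G^{(\alpha)}(X))^p$ corresponds to $X^p\equiv\alpha^p-\alpha$; transporting this statement across $L$ and observing that $L$ sends the class of $X^p$ (which is the class of the constant $\alpha^p-\alpha$, hence the class of $L_{p-1}^{(\alpha)}(\alpha^p-\alpha)=L_{p-1}^{(\alpha^p)}(\alpha^p-\alpha)$, the last equality because $\alpha^p\equiv\alpha$ does \emph{not} hold here but the coefficients of $L_{p-1}$ are $p$-th powers so $L_{p-1}^{(\alpha)}$ evaluated at a $p$-th-power-behaving argument rewrites accordingly) yields the stated modulus. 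Making this last rewriting precise — essentially that $L_{p-1}^{(\alpha)}(c)$ for a constant $c$ equals $L_{p-1}^{(\alpha^p)}(c)$ once one is working modulo the relevant relation, or more directly that the Frobenius twist $\alpha\mapsto\alpha^p$ on coefficients is what converts the ``input'' modulus $X^p-(\alpha^p-\alpha)$ into the ``output'' modulus — is the delicate point, and I would handle it by an explicit coefficient computation using $\binom{\alpha-1}{p-1-k}^p=\binom{\alpha^p-1}{p-1-k}$ in $\F_p[\alpha]$.
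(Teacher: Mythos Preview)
Your strategy differs from the paper's: the paper adjoins $\alpha^{1/p}$ so that the modulus $X^p-(\alpha^p-\alpha)$ becomes a $p$th power, shifts the variable to make the modulus $X^p$, and then invokes genuine compositional inverses in the formal power series ring $\F_p(\alpha^{1/p})[[X]]$ before translating back. Your finite-dimensional ``left inverse implies right inverse'' idea is a legitimate alternative route, but as written it has a gap.

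The problem is the injectivity step. The map $Z\mapsto G(Z)$ you invoke is an \emph{evaluation} map on $B:=\F_p(\alpha)[X]/(X^p-c)$, not a ring homomorphism, and Theorem~\ref{thm:inverse} does not furnish it with a one-sided inverse: the relation $G(L(X))\equiv X\pmod{X^p-(\alpha^p-\alpha)}$ does not say $G(L(z))=z$ for arbitrary $z\in B$, because $z^p-(\alpha^p-\alpha)$ need not vanish in $B$. The repair is to use $\F_p(\alpha)$-algebra homomorphisms instead. Writing $A:=\F_p(\alpha)[X]/(X^p-(\alpha^p-\alpha))$ with generator $t$ and $B$ with generator $s$, define $\phi\colon B\to A$ by $s\mapsto L(t)$ and $\psi\colon A\to B$ by $t\mapsto G(s)$. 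Once these are well-defined, Theorem~\ref{thm:inverse} gives $\phi(\psi(t))=G(\phi(s))=G(L(t))=t$, so $\phi\circ\psi=\mathrm{id}_A$; since both maps are $\F_p(\alpha)$-linear between $p$-dimensional spaces, $\psi\circ\phi=\mathrm{id}_B$ follows, and $\psi(\phi(s))=L(\psi(t))=L(G(s))=s$ is exactly the claim.

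Your ``step~1'' is precisely the well-definedness check for $\psi$, but your justification contains the false assertion $L_{p-1}^{(\alpha)}(\alpha^p-\alpha)=L_{p-1}^{(\alpha^p)}(\alpha^p-\alpha)$. The clean argument is: by Frobenius $(G^{(\alpha)}(X))^p=G^{(\alpha^p)}(X^p)$ (using $b_{1,s}(\alpha)^p=b_{1,s}(\alpha^p)$), so modulo $X^p-c$ this equals $G^{(\alpha^p)}(c)=G^{(\alpha^p)}\bigl(L_{p-1}^{(\alpha^p)}(\alpha^p-\alpha)\bigr)$; now apply Theorem~\ref{thm:inverse} with $\alpha$ replaced by $\alpha^p$ and specialize at $X=\alpha^p-\alpha$, where the modulus $(\alpha^p-\alpha)^p-(\alpha^{p^2}-\alpha^p)$ vanishes identically, to obtain the value $\alpha^p-\alpha$. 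The analogous check for $\phi$ is $(L_{p-1}^{(\alpha)}(X))^p=L_{p-1}^{(\alpha^p)}(X^p)\equiv L_{p-1}^{(\alpha^p)}(\alpha^p-\alpha)=c$ modulo $X^p-(\alpha^p-\alpha)$.
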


According to~\cite[Lemma~1]{AviMat:Laguerre}, the expression $L_{p-1}^{(\alpha^p)}(\alpha^p-\alpha)$
which appears in the modulus of the congruence in Corollary~\ref{cor:inverse} can be explicitly factorized as
\begin{equation}\label{eq:L^p}
L_{p-1}^{(\alpha^p)}(\alpha^p-\alpha)=\prod_{k=1}^{p-1}(1+\alpha/k)^k.
\end{equation}

\subsection{Additional information on the coefficients of $G^{(\alpha)}(X)$}\label{subsec:more}

As mentioned earlier, Theorem~\ref{thm:functional-Laguerre-simplified} is a weaker and simplified version of~\cite[Proposition~2]{AviMat:Laguerre}.
This is sufficient for an application to grading switching only under special circumstances.
For full generality the applications need the following additional information on the rational functions $c_i(\alpha,\beta)$.

\begin{theorem}[{\cite[Proposition~2]{AviMat:Laguerre}}]\label{thm:functional-Laguerre-more}
The rational expressions $c_i(\alpha,\beta)$ of Theorem~\ref{thm:functional-Laguerre-simplified}
belong to the subring
$\F_p\bigl[\alpha,\beta,\bigl((\alpha+\beta)^{p-1}-1\bigr)^{-1}\bigr]$ of
$\F_p(\alpha,\beta)$.
\end{theorem}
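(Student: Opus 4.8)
The plan is to read off the $c_i(\alpha,\beta)$ from Theorem~\ref{thm:functional-Laguerre-simplified} itself, clearing the implicit ``denominator'' $L_{p-1}^{(\alpha+\beta)}(X+Y)$ by means of a Frobenius identity for Laguerre polynomials. Throughout put $\gamma=\alpha+\beta$ and work in the ring $R=\F_p[\alpha,\beta][X,Y]/I$, where $I$ is generated by $X^p-(\alpha^p-\alpha)$ and $Y^p-(\beta^p-\beta)$. Since these generators are monic of degree $p$ in $X$ and in $Y$ respectively, $R$ is a free $\F_p[\alpha,\beta]$-module with basis $\{X^iY^j:0\le i,j<p\}$, and reducing any element of $\F_p[\alpha,\beta][X,Y]$ to its unique representative of $X$- and $Y$-degree $<p$ keeps all coefficients inside $\F_p[\alpha,\beta]$.

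First I would record two elementary facts. From $L_{p-1}^{(\delta)}(T)=-\sum_{k=0}^{p-1}(\delta-1)_{p-1-k}\,T^k$ it is immediate that $L_{p-1}^{(\alpha)}(X)$, $L_{p-1}^{(\beta)}(Y)$ and $L_{p-1}^{(\gamma)}(X+Y)$ all lie in $\F_p[\alpha,\beta][X,Y]$. Moreover, raising to the $p$-th power and using $\bigl((\delta-1)_{m}\bigr)^{p}=(\delta^{p}-1)_{m}$ (a consequence of $j^p=j$ for $j\in\F_p$) gives the polynomial identity $L_{p-1}^{(\delta)}(T)^{p}=L_{p-1}^{(\delta^{p})}(T^{p})$ in $\F_p[\delta,T]$. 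Specializing $T=X+Y$, so that modulo $I$ one has $T^{p}=X^{p}+Y^{p}\equiv(\alpha^{p}-\alpha)+(\beta^{p}-\beta)=\gamma^{p}-\gamma$, and invoking the factorization~\eqref{eq:L^p} with the parameter $\gamma$, I obtain in $R$
\[
L_{p-1}^{(\gamma)}(X+Y)^{p}\equiv L_{p-1}^{(\gamma^{p})}(\gamma^{p}-\gamma)=\prod_{k=1}^{p-1}\bigl(1+\gamma/k\bigr)^{k}=:\Delta ,
\]
a nonzero element of $\F_p[\alpha,\beta]$ that no longer involves $X$ or $Y$.

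Next I would multiply the congruence of Theorem~\ref{thm:functional-Laguerre-simplified} through by $L_{p-1}^{(\gamma)}(X+Y)^{p-1}$; the factor $L_{p-1}^{(\gamma)}(X+Y)$ on the right then becomes $L_{p-1}^{(\gamma)}(X+Y)^{p}\equiv\Delta$, a scalar, yielding
\[
L_{p-1}^{(\gamma)}(X+Y)^{p-1}\,L_{p-1}^{(\alpha)}(X)\,L_{p-1}^{(\beta)}(Y)\equiv\Delta\Bigl(c_0(\alpha,\beta)+\sum_{i=1}^{p-1}c_i(\alpha,\beta)\,X^{i}Y^{p-i}\Bigr)\pmod I .
\]
The left-hand side is a polynomial over $\F_p[\alpha,\beta]$, hence so is its canonical representative in $R$; comparing the coefficients of the basis monomials $X^{i}Y^{p-i}$ (and of $1$) with the already-reduced right-hand side shows that $\Delta\,c_i(\alpha,\beta)\in\F_p[\alpha,\beta]$ for all $i$. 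Thus every $c_i$ lies in $\F_p[\alpha,\beta][\Delta^{-1}]$. Finally I would identify this localization: since $1+\gamma/k=(k+\gamma)/k$ and $\prod_{k}k^{k}\in\F_p^{\ast}$, inverting $\Delta$ is the same as inverting $\prod_{k=1}^{p-1}(\gamma+k)$, and as $k$ runs over $\F_p^{\ast}$ so does $-k$, so $\prod_{k=1}^{p-1}(\gamma+k)=\prod_{j\in\F_p^{\ast}}(\gamma-j)=\gamma^{p-1}-1$. Hence $\F_p[\alpha,\beta][\Delta^{-1}]=\F_p[\alpha,\beta]\bigl[\bigl((\alpha+\beta)^{p-1}-1\bigr)^{-1}\bigr]$, which is exactly the asserted subring.

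The real content is the Frobenius step, i.e.\ the observation that $L_{p-1}^{(\gamma)}(X+Y)^{p}$ collapses modulo $I$ to the explicit scalar $\Delta$ furnished by~\eqref{eq:L^p}; after that the argument is bookkeeping, the last ingredient being the trivial fact that $\Delta$ and $(\alpha+\beta)^{p-1}-1$ have the same radical in $\F_p[\alpha+\beta]$ and therefore generate the same localization of $\F_p[\alpha,\beta]$. The only points warranting care are that the congruence of Theorem~\ref{thm:functional-Laguerre-simplified} may indeed be multiplied by an element of $R$ — it is an honest identity in $\F_p(\alpha,\beta)[X,Y]/I$ — and that $\Delta\neq 0$, so that the resulting division is legitimate.
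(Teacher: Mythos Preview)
The paper does not actually prove this theorem; both Theorem~\ref{thm:functional-Laguerre-simplified} and Theorem~\ref{thm:functional-Laguerre-more} are quoted from~\cite{AviMat:Laguerre} without argument. Your proof therefore supplies what the present paper omits, and it is correct. The Frobenius step $L_{p-1}^{(\gamma)}(X+Y)^{p}\equiv L_{p-1}^{(\gamma^{p})}(\gamma^{p}-\gamma)=\Delta$ modulo $I$ is exactly the right device: it makes $L_{p-1}^{(\gamma)}(X+Y)$ a unit once $\Delta$ is inverted, so the $c_i$ are not merely shown to lie in the asserted subring but are in fact uniquely determined by the congruence of Theorem~\ref{thm:functional-Laguerre-simplified} (a point the paper leaves implicit). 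The identification of $\F_p[\alpha,\beta][\Delta^{-1}]$ with $\F_p\bigl[\alpha,\beta,\bigl((\alpha+\beta)^{p-1}-1\bigr)^{-1}\bigr]$ via the shared radical is routine, and your attention to where the congruence lives and to $\Delta\neq 0$ covers the only delicate points.
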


The relevance of Theorem~\ref{thm:functional-Laguerre-more} in the context of grading switching is to guarantee that
$c_i(\alpha,\beta)$ can be evaluated on elements $\tilde\alpha,\tilde\beta$ of some extension of $\F_p$
as long as $\tilde\alpha+\tilde\beta\not\in\F_p^\ast$.
For example, when $p=3$ the rational expressions $c_0(\alpha,\beta)$, $c_1(\alpha,\beta)$, and $c_2(\alpha,\beta)$ equal, respectively,
\[
\frac{(1-\alpha^2)(1-\beta^2)}{1-(\alpha+\beta)^2},
\qquad
\frac{\alpha-1}{1-(\alpha+\beta)^2},
\qquad
\frac{\beta-1}{1-(\alpha+\beta)^2}.
\]
We supplement our Theorem~\ref{thm:inverse} with corresponding information on the coefficients of $G^{(\alpha)}(X)$.

\begin{theorem}\label{thm:inverse-more}
The coefficients of the polynomial $G^{(\alpha)}(X)$ of Theorem~\ref{thm:inverse} belong to the subring
%the subring of  $\F_{p}(\alpha)$ defined by
$\F_{p}[\alpha, (\alpha^{p-1}-1)^{-1}]$ of $\F_p(\alpha)$.
\end{theorem}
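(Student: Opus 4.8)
The plan is to deduce the statement from the explicit formula for $G^{(\alpha)}(X)$ in Theorem~\ref{thm:inverse}, by analyzing the denominators $\prod_{s=1}^{k-1}b_{1,s}(\alpha)$. Writing the coefficient of $X^k$ as $-\frac{1}{k}\cdot\prod_{s=1}^{k-1}b_{1,s}(\alpha)^{-1}$, it suffices to show that each polynomial $b_{1,s}(\alpha)\in\F_p[\alpha]$ divides some power of $\alpha^{p-1}-1$ in $\F_p[\alpha]$; equivalently, every root of $b_{1,s}(\alpha)$ in an algebraic closure of $\F_p$ lies in $\F_p^\ast$. First I would recall the key observation already recorded in the excerpt: for a positive integer $a$, the value $b_{1,s}(a)$ equals the coefficient of $X^{p-1}$ in $(1+X)^{a-1}(1-X/s)^{sa-1}\in\F_p[X]$ (using $r=1$, and $r^{p-1}\equiv 1$). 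Since $b_{1,s}(\alpha)$ has degree at most $p-1$, it is determined by its values at $\alpha=1,2,\dots,p$, so understanding these values is enough to pin down its factorization over $\F_p$.

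The main step is to show that $b_{1,s}(a)\neq 0$ whenever $a\not\equiv 0\pmod p$ and $1+s\not\equiv 0\pmod p$ (the excluded case $s=p-1$ giving the zero polynomial $b_{1,-1}(\alpha)$, already disposed of in the excerpt, does not occur here since $1\le s\le k-1\le p-2$). For this I would argue along the lines of the coefficient-extraction description: if $a$ is an integer with $1\le a\le p-1$, then $(1+X)^{a-1}(1-X/s)^{sa-1}$ is a genuine polynomial of degree $(a-1)+(sa-1)=(s+1)a-2$, and I need its $X^{p-1}$-coefficient to be nonzero. When $(s+1)a-2<p-1$ this coefficient is literally zero, so one must be more careful — the correct statement is rather that $b_{1,s}(\alpha)$, as a polynomial of degree $<p$, has all its roots in $\F_p^\ast$, and one identifies exactly which residues $a\in\{1,\dots,p-1\}$ are roots. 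I expect that the clean way to see this is to use Lucas-type / Vandermonde manipulations on the defining sum~\eqref{eq:b(ra,sa)}, or to invoke the relation with Jacobi polynomials promised in Section~\ref{sec:Jacobi}; but since Theorem~\ref{thm:roots} (which gives the full factorization) is stated later, the cleanest self-contained route for \emph{this} theorem is just to establish the weaker qualitative fact that no root of $b_{1,s}(\alpha)$ equals $0$ and every root lies in $\F_p$. The first part, $b_{1,s}(0)=1$ for $s\neq p-1$, is already computed in the excerpt; the second part — that all roots lie in the prime field — is the substantive content.

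For the "all roots in $\F_p$" part, I would show that $b_{1,s}(\alpha)$ divides $(\alpha^{p-1}-1)^{p}$ (or some fixed power) in $\F_p[\alpha]$ by a direct argument on the sum. Concretely, substitute $r=1$ and use $\binom{\alpha-1}{p-1-k}=(-1)^{p-1-k}\binom{k-\alpha+1}{p-1-k}$ type identities together with the product formula $(\alpha-1)_{p-1}=\alpha^{p-1}-1$ in $\F_p[\alpha]$ (already quoted before Theorem~\ref{thm:functional-Laguerre-simplified}), to rewrite $b_{1,s}(\alpha)$ with an explicit factor $\alpha^{p-1}-1$ pulled out, plus a remainder handled by induction on $s$ or by a telescoping in $k$. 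The expected obstacle is precisely this bookkeeping: showing that after clearing the Pochhammer denominators the numerator of $b_{1,s}(\alpha)$, viewed in $\F_p[\alpha]$, is divisible by a controlled power of $\alpha^{p-1}-1$. Once that divisibility is in hand, each $b_{1,s}(\alpha)^{-1}$ lies in $\F_p[\alpha,(\alpha^{p-1}-1)^{-1}]$, the scalars $1/k$ for $1\le k\le p-1$ lie in $\F_p$, and therefore so do all coefficients of $G^{(\alpha)}(X)$, which is exactly the assertion of Theorem~\ref{thm:inverse-more}. I would then remark that the sharper Theorem~\ref{thm:roots} of Section~\ref{sec:roots} refines this by identifying the roots and multiplicities explicitly, so this theorem is really a corollary of that analysis, but can also be proved directly as sketched.
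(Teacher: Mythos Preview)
Your reduction is correct: the coefficients of $G^{(\alpha)}(X)$ lie in $\F_p[\alpha,(\alpha^{p-1}-1)^{-1}]$ if and only if each $b_{1,s}(\alpha)$ (for $1\le s\le p-2$) divides a power of $\alpha^{p-1}-1$, i.e.\ all its roots lie in $\F_p^\ast$; and you correctly note $b_{1,s}(0)=1$. But the substantive step---showing the roots lie in $\F_p$---is missing. Your proposed routes (``Lucas-type / Vandermonde manipulations'', ``rewrite $b_{1,s}(\alpha)$ with an explicit factor $\alpha^{p-1}-1$ pulled out, plus a remainder'', ``induction on $s$ or telescoping'') are not fleshed out, and the second is confused about the direction of divisibility: you want $b_{1,s}(\alpha)\mid(\alpha^{p-1}-1)^N$, not the reverse, and in fact $b_{1,s}(\alpha)$ has degree $(p-1)/2$, so $\alpha^{p-1}-1$ cannot be a factor of it. Nothing in the sketch explains why an arbitrary root of $b_{1,s}(\alpha)$ in an algebraic closure must satisfy $\alpha^{p-1}=1$.

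The paper closes this gap with a single identity, Lemma~\ref{lemma:b(a)b(-a)}:
\[
b_{1,s}(\alpha)\cdot b_{1,s}(-\alpha)=1-\alpha^{p-1}
\qquad(1\le s\le p-2),
\]
which immediately gives $b_{1,s}(\alpha)^{-1}=b_{1,s}(-\alpha)/(1-\alpha^{p-1})\in\F_p[\alpha,(\alpha^{p-1}-1)^{-1}]$ and hence Theorem~\ref{thm:inverse-more}. The proof of this identity is not a direct binomial manipulation at all: it comes from the functional equation for Laguerre polynomials (Lemma~\ref{lemma:product}), by computing $\bigl(L_{p-1}^{(\alpha)}(X)\,L_{p-1}^{(-\alpha)}(-X)\bigr)^{s+1}$ modulo $X^p-(\alpha^p-\alpha)$ in two ways and inducting on $s$. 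Note also that your suggested fallback---invoking Theorem~\ref{thm:roots}---does not avoid this, since the proof of Theorem~\ref{thm:roots} itself relies on Lemma~\ref{lemma:b(a)b(-a)}.
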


 %Theorem~\ref{thm:inverse-more} effectively says that all roots of the denominators of the rational expressions in $\alpha$
 %are nonzero elements of the prime field $\F_p$.

As an example, when $p=3$ we have
$G^{(\alpha)}(X) =-X -X^2/(\alpha+2)$.
Theorem~\ref{thm:inverse-more} will follow at once from the identity
$b_{1,s}(\alpha)b_{1,s}(-\alpha)=1-\alpha^{p-1}$
for $s=1, \dots, p-2$, which we will prove in Lemma~\ref{lemma:b(a)b(-a)}
of Section~\ref{sec:roots}.
In particular, this identity shows that $b_{1,s}(\alpha)$ is a polynomial of degree $(p-1)/2$, and its roots are exactly half the elements of $\F_p^{\ast}$.

We devote the remainder of Section~\ref{sec:roots} to finding a simple way to decide exactly which elements of $\F_p^{\ast}$ are roots of $b_{1,s}(\alpha)$,
in Theorem~\ref{thm:roots}.
That result provides a simpler description of the polynomials $b_{1,s}(\alpha)$ and, consequently, of the coefficients of $G^{(\alpha)}(X)$.

\subsection{Functional equations for the polynomials $G^{(\alpha)}(X)$}\label{subsec:functional}
The interpretation of the polynomials $G^{(\alpha)}(X)$ as generalized logarithms suggests that they might satisfy
some analogue of the functional equation $\log(xy)=\log(x)+\log(y)$.
After all, according to Theorem~\ref{thm:functional-Laguerre-simplified} the generalized exponentials $L_{p-1}^{(\alpha)}(X)$,
which the $G^{(\alpha)}(X)$ suitably invert,
satisfy a weak version of the functional equation $\exp(x)\exp(y)=\exp(x+y)$.
Unfortunately, switching from a weak functional equation for a weak exponential to a corresponding functional equation for a weak logarithm
does not seem to go through as smoothly as it would for their classical analogues.
The picture is enriched, but also made much more complex, by the presence of additional functional equations
for the truncated logarithm, which we briefly discuss now.

As we observed after Theorem~\ref{thm:inverse}, the polynomials $G^{(\alpha)}(X)$ are a parametrized version of the finite polylogarithm
$\pounds_1(X)=\sum_{k=1}^{p-1}X^k/k$,
because $G^{(0)}(X)=-\pounds_1(X)$.
The polynomial $\pounds_1(X)$ (viewed in characteristic $p$ as we do throughout this paper)
satisfies certain functional equations which have no analogues for the classical logarithm which it resembles.
One such equation is
$\pounds_1(X)=-X^p\pounds_1(1/X)$,
which is an immediate consequence of Wilson's theorem, $(p-1)!=-1$ in $\F_p$.
This functional equation does extend to the following equation for the polynomials $G^{(\alpha)}(X)$.

\begin{theorem}\label{thm:reciprocal}
The polynomials $G^{(\alpha)}(X)$ satisfy
\[
\prod_{k=1}^{p-1}(1+\alpha/k)^{k}\cdot
G^{(\alpha)}(X)=
-X^{p}\cdot
G^{(-\alpha)}\left(\frac{1-\alpha^{p-1}}{X}\right)
\]
in the polynomial ring $\F_{p}(\alpha)[X]$.
\end{theorem}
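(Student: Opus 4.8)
The plan is to reduce Theorem~\ref{thm:reciprocal} to the explicit formula for $G^{(\alpha)}(X)$ given in Theorem~\ref{thm:inverse}, together with the identity $b_{1,s}(\alpha)b_{1,s}(-\alpha)=1-\alpha^{p-1}$ from Lemma~\ref{lemma:b(a)b(-a)} and the factorization~\eqref{eq:L^p} of $L_{p-1}^{(\alpha^p)}(\alpha^p-\alpha)=\prod_{k=1}^{p-1}(1+\alpha/k)^k$, which is exactly the scalar appearing on the left-hand side. So what must be proved is the polynomial identity
\[
L_{p-1}^{(\alpha^p)}(\alpha^p-\alpha)\cdot
\left(-\sum_{k=1}^{p-1}\frac{1}{k}\,\frac{X^k}{\prod_{s=1}^{k-1}b_{1,s}(\alpha)}\right)
=
-X^p\cdot\left(-\sum_{k=1}^{p-1}\frac{1}{k}\,\frac{(1-\alpha^{p-1})^k}{X^k\prod_{s=1}^{k-1}b_{1,s}(-\alpha)}\right)
\]
in $\F_p(\alpha)[X]$. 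The right-hand side, after multiplying numerator and denominator through, is a polynomial in $X$ because the substitution $k\mapsto p-k$ turns $X^{p-k}$ into the running powers $X^1,\dots,X^{p-1}$; the natural move is therefore to reindex the right-hand sum by $j=p-k$ and compare the coefficient of $X^j$ on both sides, for $1\le j\le p-1$.

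First I would carry out this reindexing. On the right, the term with $k=p-j$ contributes, up to the overall sign, the monomial
\[
-\frac{1}{p-j}\,\frac{(1-\alpha^{p-1})^{p-j}}{\prod_{s=1}^{p-j-1}b_{1,s}(-\alpha)}\;X^{p-(p-j)}
=\frac{1}{j}\,\frac{(1-\alpha^{p-1})^{p-j}}{\prod_{s=1}^{p-j-1}b_{1,s}(-\alpha)}\;X^{j},
\]
using $1/(p-j)=-1/j$ in $\F_p$. Comparing with the coefficient of $X^j$ on the left, namely $-\,L_{p-1}^{(\alpha^p)}(\alpha^p-\alpha)/\bigl(j\prod_{s=1}^{j-1}b_{1,s}(\alpha)\bigr)$, the claimed identity reduces (after cancelling $1/j$ and a sign) to
\[
\prod_{k=1}^{p-1}(1+\alpha/k)^{k}
=\frac{(1-\alpha^{p-1})^{p-j}}{\prod_{s=1}^{p-j-1}b_{1,s}(-\alpha)}\cdot\prod_{s=1}^{j-1}b_{1,s}(\alpha)
\]
for every $j$ with $1\le j\le p-1$. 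Here the left side is independent of $j$, so the right side must be too; the key computational step is to verify this and to identify the common value.

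The engine for that verification is Lemma~\ref{lemma:b(a)b(-a)}: since $b_{1,s}(\alpha)b_{1,s}(-\alpha)=1-\alpha^{p-1}$ for $1\le s\le p-2$, I can replace each factor $b_{1,s}(-\alpha)^{-1}$ in the denominator by $b_{1,s}(\alpha)/(1-\alpha^{p-1})$. Doing so in the product $\prod_{s=1}^{p-j-1}b_{1,s}(-\alpha)^{-1}$ produces $(1-\alpha^{p-1})^{-(p-j-1)}\prod_{s=1}^{p-j-1}b_{1,s}(\alpha)$; combined with the numerator $(1-\alpha^{p-1})^{p-j}$ this leaves a single factor $(1-\alpha^{p-1})$ and the product $\prod_{s=1}^{p-j-1}b_{1,s}(\alpha)$. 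Multiplying by the remaining $\prod_{s=1}^{j-1}b_{1,s}(\alpha)$, and recalling $b_{1,p-1}(\alpha)=b_{1,-1}(\alpha)=0$ is the excluded value while $b_{1,s}$ for $1\le s\le p-2$ exhausts all the relevant factors, the right side collapses to $(1-\alpha^{p-1})\cdot\prod_{s=1}^{p-2}b_{1,s}(\alpha)$ — manifestly independent of $j$, as required. It then remains to check the single scalar identity
\[
(1-\alpha^{p-1})\prod_{s=1}^{p-2}b_{1,s}(\alpha)=\prod_{k=1}^{p-1}(1+\alpha/k)^{k}.
\]

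The main obstacle is this last identity, which compares the product of all the denominator-polynomials $b_{1,s}(\alpha)$ against the explicit factorization of $L_{p-1}^{(\alpha^p)}(\alpha^p-\alpha)$. I expect to prove it by degree-and-root bookkeeping: each $b_{1,s}(\alpha)$ has degree $(p-1)/2$ with roots exactly half of $\F_p^\ast$ (as noted after Theorem~\ref{thm:inverse-more}), so the left side has degree $1+\frac{p-1}{2}(p-2)$, matching $\sum_{k=1}^{p-1}k=\binom{p}{2}$ on the right; and the multiplicity with which a given element $-k$ of $\F_p^\ast$ occurs as a root of the right side is $k$, so one must show it occurs as a root of $\prod_s b_{1,s}(\alpha)$ with the same total multiplicity $k$ (the extra factor $1-\alpha^{p-1}=\prod_{t\in\F_p^\ast}(\alpha-t)$ contributing $1$ to each). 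This is precisely the kind of information about roots of the $b_{1,s}(\alpha)$ supplied by Theorem~\ref{thm:roots}, so I would invoke that theorem to finish. Alternatively — and perhaps more cleanly — one can sidestep the root count by evaluating at a generic positive integer $a=\alpha$: then $b_{1,s}(a)$ is the coefficient of $X^{p-1}$ in $(1+X)^{a-1}(1-X/s)^{sa-1}$ (the $r=1$ case of the observation in Subsection~\ref{subsec:more}), and a generating-function or telescoping argument over $s=1,\dots,p-2$ should directly yield $\prod_s b_{1,s}(a)=(1-a^{p-1})^{-1}\prod_{k=1}^{p-1}(1+a/k)^k$, after which equality of polynomials follows since it holds at infinitely many integer points.
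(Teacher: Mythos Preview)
Your overall strategy matches the paper's proof---reindex, apply Lemma~\ref{lemma:b(a)b(-a)}, and reduce to the scalar identity $(1-\alpha^{p-1})\prod_{s=1}^{p-2}b_{1,s}(\alpha)=\prod_{k=1}^{p-1}(1+\alpha/k)^{k}$, which is exactly Corollary~\ref{cor:product} (proved there via Theorem~\ref{thm:roots}, just as you propose). However, there is a genuine gap in your ``collapse'' step. After applying Lemma~\ref{lemma:b(a)b(-a)} you arrive at
\[
(1-\alpha^{p-1})\cdot\prod_{s=1}^{p-j-1}b_{1,s}(\alpha)\cdot\prod_{s=1}^{j-1}b_{1,s}(\alpha),
\]
and you assert that this equals $(1-\alpha^{p-1})\prod_{s=1}^{p-2}b_{1,s}(\alpha)$ because ``$b_{1,s}$ for $1\le s\le p-2$ exhausts all the relevant factors.'' But the two partial products overlap: for $2\le j\le p-2$ both contain $b_{1,1},\ldots,b_{1,\min(j-1,p-j-1)}$, so their product has repeated factors rather than the full range. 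What saves you is Corollary~\ref{cor:symmetry}, namely $b_{1,s}(\alpha)=b_{1,p-1-s}(\alpha)$, which lets you rewrite $\prod_{s=1}^{j-1}b_{1,s}(\alpha)=\prod_{t=p-j}^{p-2}b_{1,t}(\alpha)$ and then concatenate the two ranges into $\prod_{s=1}^{p-2}b_{1,s}(\alpha)$. The paper's proof invokes this symmetry explicitly at the corresponding step, and you cannot do without it.

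A minor slip: in your degree count for the final identity you wrote $1+\tfrac{p-1}{2}(p-2)$ for the left side, but $1-\alpha^{p-1}$ contributes degree $p-1$, not $1$; the correct total $(p-1)+\tfrac{p-1}{2}(p-2)=\binom{p}{2}$ then matches the right side.
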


We will prove Theorem~\ref{thm:reciprocal} in Section~\ref{sec:proofs_B}, using the explicit characterization and properties of their coefficients $b_{1,s}(\alpha)$
obtained in Section~\ref{sec:roots}.

Another functional equation satisfied by $\pounds_1(X)$ is $\pounds_1(1-X)=\pounds_1(X)$.
See~\cite[Equation~(13)]{MatTau:polylog} or~\cite[Equation~(28)]{MatTau:truncation} for the easy proof,
and~\cite[Lemma~3.2]{MatTau:polylog} for a generalization, probably already known to Mirimanoff at the beginning of the twentieth century~\cite{Mirimanoff}.
We do not know whether this functional equation extends to an equation involving polynomials $G^{(\alpha)}(X)$ (in a sensible way), and we formulate this as a question.

\begin{question}
Do the polynomials $G^{(\alpha)}(X)$, perhaps collectively, satisfy a generalization of the functional equation $\pounds_1(1-X)=\pounds_1(X)$
for $\pounds_1(X)=-G^{(0)}(X)$?
\end{question}

Alternate application of the functional equations
$\pounds_1(X)=\pounds_1(1-X)$
and
$\pounds_1(X)=-X^p\pounds_1(1/X)$
generates six equivalent expressions for $\pounds_1(X)$ (as a polynomial in $\F_p[X]$), namely,
\begin{multline*}
\pounds_1(X)
=\pounds_1(1-X)
=(X-1)^p\pounds_1\left(\frac{1}{1-X}\right)
=(X-1)^p\pounds_1\left(\frac{X}{X-1}\right)
\\
=-X^p\pounds_1\left(\frac{X-1}{X}\right)
=-X^p\pounds_1\left(\frac{1}{X}\right).
\end{multline*}
This invariance of $\pounds_1(X)$ under a certain group of {\em symmetries} of order six
accounts for multiple representations of any formula involving $\pounds_1(X)$.
See~\cite{MatTau:polylog} for further discussion and applications of those symmetries.
Of course generalizations to $G^{(\alpha)}(X)$ remain inaccessible as long as we miss a generalization of $\pounds_1(X)=\pounds_1(1-X)$.

The following result, also proved in Section~\ref{sec:proofs_B}, is an analogue of the equation
$\log(x^h)=h\log(x)$.

\begin{theorem}\label{thm:powers}
For any integer $h$ with $0<h<p$ we have
\[
G^{(h\alpha)}\left(\frac{X^h}{\prod_{s=1}^{h-1}b_{1,s}(\alpha)}\right)
\equiv hG^{(\alpha)}(X)
\pmod{X^p-L_{p-1}^{(\alpha^p)}(\alpha^p-\alpha)}
\]
in the polynomial ring $\F_{p}(\alpha)[X]$.
\end{theorem}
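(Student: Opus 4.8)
The plan is to reduce the claimed congruence to a statement about the inverse polynomials $L_{p-1}^{(\alpha)}$ by composing both sides with an appropriate Laguerre polynomial, thereby turning a "logarithm-like" identity into an "exponential-like" one where Theorem~\ref{thm:functional-Laguerre-simplified} (and its refinement, Theorem~\ref{thm:functional-Laguerre-more}) can be brought to bear. Concretely, I would substitute $X \mapsto L_{p-1}^{(\alpha)}(Z)$ into the asserted congruence. Using Theorem~\ref{thm:inverse}, the right-hand side $h\,G^{(\alpha)}\bigl(L_{p-1}^{(\alpha)}(Z)\bigr)$ collapses to $hZ$ modulo $Z^p-(\alpha^p-\alpha)$. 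On the left-hand side, the inner argument becomes $L_{p-1}^{(\alpha)}(Z)^h/\prod_{s=1}^{h-1}b_{1,s}(\alpha)$, and the key subgoal is to identify this, modulo $Z^p-(\alpha^p-\alpha)$, with $L_{p-1}^{(h\alpha)}(hZ)$; granting that, the left-hand side reads $G^{(h\alpha)}\bigl(L_{p-1}^{(h\alpha)}(hZ)\bigr)$, which by Theorem~\ref{thm:inverse} (applied with parameter $h\alpha$) equals $hZ$ modulo $(hZ)^p-((h\alpha)^p-h\alpha)=h^p\bigl(Z^p-(\alpha^p-\alpha)\bigr)$, i.e.\ modulo $Z^p-(\alpha^p-\alpha)$ since $h\in\F_p^\ast$. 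Matching the two sides then gives the theorem, once one checks that a congruence valid after the substitution $X\mapsto L_{p-1}^{(\alpha)}(Z)$ modulo $Z^p-(\alpha^p-\alpha)$ transports back to a congruence in $X$ modulo $X^p-L_{p-1}^{(\alpha^p)}(\alpha^p-\alpha)$, using Corollary~\ref{cor:inverse}; indeed $L_{p-1}^{(\alpha)}$ induces a bijection between the relevant quotient rings $\F_p(\alpha)[Z]/(Z^p-(\alpha^p-\alpha))$ and $\F_p(\alpha)[X]/(X^p-L_{p-1}^{(\alpha^p)}(\alpha^p-\alpha))$, with inverse induced by $G^{(\alpha)}$.

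The heart of the argument is therefore the multiplicative identity
\[
L_{p-1}^{(\alpha)}(Z)^h \equiv \Bigl(\prod_{s=1}^{h-1}b_{1,s}(\alpha)\Bigr)\, L_{p-1}^{(h\alpha)}(hZ)
\pmod{Z^p-(\alpha^p-\alpha)},
\]
which I would establish by induction on $h$, the case $h=1$ being trivial. For the inductive step, I would multiply the case-$h$ identity by one more factor $L_{p-1}^{(\alpha)}(Z)$ and apply a specialization of Theorem~\ref{thm:functional-Laguerre-simplified}: setting $Y=X$ there collapses the correction factor $c_0+\sum_i c_i X^iY^{p-i}$ into $c_0(\alpha,\beta)+\bigl(\sum_{i=1}^{p-1}c_i(\alpha,\beta)\bigr)X^p$, and modulo $X^p-(\alpha^p-\alpha)$ this is a scalar in $\F_p(\alpha,\beta)$. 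Taking $\beta=h\alpha$ then yields $L_{p-1}^{(h\alpha)}(hZ)\cdot L_{p-1}^{(\alpha)}(Z)\equiv \lambda_h(\alpha)\,L_{p-1}^{((h+1)\alpha)}((h+1)Z)$ modulo $Z^p-(\alpha^p-\alpha)$ for some scalar $\lambda_h(\alpha)$, and the induction goes through provided one can show $\prod_{s=1}^{h-1}b_{1,s}(\alpha)\cdot\lambda_h(\alpha)=\prod_{s=1}^{h}b_{1,s}(\alpha)$, i.e.\ $\lambda_h(\alpha)=b_{1,h}(\alpha)$. To pin down $\lambda_h(\alpha)=b_{1,h}(\alpha)$ I would compare a single well-chosen coefficient on both sides—most naturally the constant term in $Z$ after reducing modulo $Z^p-(\alpha^p-\alpha)$, or equivalently the coefficient of $Z^{p-1}$ in the unreduced product before reduction—and recognize it as exactly the sum defining $b_{1,h}(\alpha)$ in~\eqref{eq:b(ra,sa)}, using the product-expansion interpretation of $b_{r,s}(a)$ recorded after that equation together with $L_{p-1}^{(\gamma)}(W)=-\sum_k(\gamma-1)_{p-1-k}W^k$ and the Pochhammer/binomial dictionary $(\gamma-1)_{p-1-k}=\binom{\gamma-1}{p-1-k}(p-1-k)!\equiv(-1)^{k-1}\binom{\gamma-1}{p-1-k}$. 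The scaling convention $b_{ht,st}(\alpha)=b_{1,s}(t\alpha)$ recorded before Theorem~\ref{thm:inverse}, and the vanishing of the $b_{r,-r}$, should make the bookkeeping between $b_{1,h}(\alpha)$, $b_{h,1}(\alpha)$, and kindred symbols manageable.

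I expect the main obstacle to be the identification $\lambda_h(\alpha)=b_{1,h}(\alpha)$ of the scalar produced by the functional equation with the prescribed denominator factor. There are two related pitfalls: first, the correction factor in Theorem~\ref{thm:functional-Laguerre-simplified} is only known up to the imprecision that Theorem~\ref{thm:functional-Laguerre-more} partially removes, so one must make sure that the scalar is genuinely well-defined after the diagonal substitution $Y=X$ and reduction—this is where invoking Theorem~\ref{thm:functional-Laguerre-more} to guarantee no spurious poles at $(h+1)\alpha$ (equivalently at $((h+1)\alpha)^{p-1}=1$) is likely needed, and one should restrict to generic $\alpha$ or work in $\F_p(\alpha)$ throughout; second, the coefficient extraction that yields $b_{1,h}(\alpha)$ requires care with the normalization constants $1-\alpha^{p-1}$, $1-(h\alpha)^{p-1}$, and $1-((h+1)\alpha)^{p-1}$ coming from $L_{p-1}^{(\gamma)}(0)$, which must cancel cleanly—I anticipate the identity $b_{1,s}(\alpha)b_{1,s}(-\alpha)=1-\alpha^{p-1}$ from Lemma~\ref{lemma:b(a)b(-a)}, or at least its consequences for $\deg b_{1,s}$ and its roots, will be the tool that makes these cancellations transparent. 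Everything else—the reduction via composition with $L_{p-1}^{(\alpha)}$, the bijection of quotient rings, the base case—is routine once the scalar is correctly identified.
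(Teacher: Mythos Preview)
Your proposal is correct and follows essentially the same strategy as the paper: both proofs rest on the power identity
\[
L_{p-1}^{(\alpha)}(Z)^h \equiv \Bigl(\prod_{s=1}^{h-1}b_{1,s}(\alpha)\Bigr)\, L_{p-1}^{(h\alpha)}(hZ)
\pmod{Z^p-(\alpha^p-\alpha)},
\]
which is Equation~\eqref{eq:power} in the paper, already established there via Lemma~\ref{lemma:product}; your inductive derivation of $\lambda_h(\alpha)=b_{1,h}(\alpha)$ by comparing the coefficient of $Z^{p-1}$ is exactly that lemma, so your anticipated ``main obstacle'' is in fact already done. The only genuine difference lies in how the composition with $G^{(h\alpha)}$ is made rigorous: the paper extends scalars to $\F_p(\alpha^{1/p})$, shifts the variable so the modulus becomes $x^p$, and works in a formal power series ring before shifting back, whereas you argue directly that $X\mapsto L_{p-1}^{(\alpha)}(Z)$ induces an isomorphism $\F_p(\alpha)[X]/(X^p-L_{p-1}^{(\alpha^p)}(\alpha^p-\alpha))\to\F_p(\alpha)[Z]/(Z^p-(\alpha^p-\alpha))$ with inverse $Z\mapsto G^{(\alpha)}(X)$ (via Theorem~\ref{thm:inverse} and Corollary~\ref{cor:inverse}). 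Both are valid; yours avoids the field extension at the cost of checking that the substitution respects the moduli (which follows from $L_{p-1}^{(\alpha)}(Z)^p=L_{p-1}^{(\alpha^p)}(Z^p)$ by Frobenius). Your concerns about poles in the $c_i$ are unnecessary here, since $\alpha$ is an indeterminate throughout and Lemma~\ref{lemma:product} already computes the scalar directly without reference to Theorem~\ref{thm:functional-Laguerre-more}.
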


According to Lemma~\ref{lemma:product},
when $h=p-1$ the equation of Theorem~\ref{thm:powers} can also be written as
\[
G^{(-\alpha)}\left(\frac{X^{p-1}\cdot(1-\alpha^{p-1})}{L_{p-1}^{(\alpha^p)}(\alpha^p-\alpha)}\right)
\equiv -G^{(\alpha)}(X)
\pmod{X^p-L_{p-1}^{(\alpha^p)}(\alpha^p-\alpha)}.
\]
However, this last congruence is also a weak consequence of Theorem~\ref{thm:reciprocal}.

Of course $\log(x^h)=h\log(x)$ for integer $h$ is a consequence of the fundamental functional equation $\log(xy)=\log(x)+\log(y)$.
That equation has an analogue for $\pounds_1(X)$ in the polynomial ring $\F_p[X,Y]$, namely,
\begin{equation}\label{eq:4-term}
\pounds_1(X)-\pounds_1(Y)
+X^p\pounds_1\left(\frac{Y}{X}\right)
+(1-X)^p\pounds_1\left(\frac{1-Y}{1-X}\right)
=0.
\end{equation}
This equation seems to have been first noticed (at least in recent times) by Kontsevich~\cite{Kontsevich}, in a somehow more rudimentary form,
see~\cite[Equation~(31)]{MatTau:truncation} and the subsequent discussion.
It admits several proofs, two of which are given in~\cite{MatTau:truncation}.
We sketch a variation of one of those proofs which emphasizes its origin in the fundamental functional equation of the logarithm,
in its variant
\[
-\log(1-X)+\log(1-Y)
-\log\left(\frac{1-Y}{1-X}\right)
=0,
\]
which takes place in the power series ring $\Q[[X,Y]]$.
Viewing modulo the ideal $(X,Y)^p$ and then reducing modulo $p$ one finds
\[
\pounds_1(X)-\pounds_1(Y)
+\pounds_1\left(1-\frac{1-Y}{1-X}\right)
\equiv 0\pmod{(X,Y)^p}
\]
in the power series ring $\F_p[[X,Y]]$.
Rewriting the last summand using $\pounds_1(1-Z)=\pounds_1(Z)$ we find that a weak version of Equation~\eqref{eq:4-term}
holds as a congruence modulo the ideal $(X,Y)^p$ of $\F_p[[X,Y]]$.
The symmetries (or functional equations) for $\pounds_1(X)$ described earlier allow one to, both recover the missing term
in Equation~\eqref{eq:4-term}, and prove that it holds in full, first in $\F_p[[X,Y]]$ but then in $\F_p[X,Y]$ where Equation~\eqref{eq:4-term} takes place,
see~\cite[Equation~(31)]{MatTau:truncation} for the details.
Unfortunately, we are not able to extend these arguments to a generalization for $G^{(\alpha)}(X)$,
and so we leave that as our final question.

\begin{question}
Do the polynomials $G^{(\alpha)}(X)$ satisfy a generalization of Equation~\eqref{eq:4-term}
for $\pounds_1(X)=-G^{(0)}(X)$?
\end{question}

Such a generalization would have to be a congruence in the guise of the equation in Theorem~\ref{thm:functional-Laguerre-simplified},
involving $G^{(\alpha)}(X)$, $G^{(\beta)}(Y)$, and other terms.

\section{Proofs of Theorem~\ref{thm:inverse} and Corollary~\ref{cor:inverse}}\label{sec:proofs}

The first step towards computing $G^{(\alpha)}\bigl(L_{p-1}^{(\alpha)}(X)\bigr)$
modulo $X^p-(\alpha^p-\alpha)$ as in Theorem~\ref{thm:inverse}
is computing the powers of $L_{p-1}^{(\alpha)}(X)$ modulo $X^p-(\alpha^p-\alpha)$.
For that we need to consider the products $L_{p-1}^{(r\alpha)}(rX) L_{p-1}^{(s\alpha)}(sX)$, for $0<r,s<p$.
Note that under the simultaneous substitutions $X^\prime=rX$ and $\alpha^\prime=r\alpha$,
for some $0<r<p$, the modulus $X^p-(\alpha^p-\alpha)$ gets simply multiplied by the nonzero scalar $r$.

Thus, as a special case of Theorem~\ref{thm:functional-Laguerre-simplified},
after replacing $X$ with $rX$, $Y$ with $sX$, and similar substitutions for the parameters $\alpha$ and $\beta$, we find
\begin{equation*}
L_{p-1}^{(r\alpha)}(rX)
L_{p-1}^{(s\alpha)}(sX)
\equiv
b'_{r,s}(\alpha)\cdot
L_{p-1}^{((r+s)\alpha)}\bigl((r+s)X\bigr)
\pmod{X^p-(\alpha^p-\alpha)},
\end{equation*}
where
\begin{equation}\label{eq:b'(ra,sa)}
b'_{r,s}(\alpha)=
c_0(r\alpha,s\alpha)+(\alpha^{p}-\alpha)\sum_{i=1}^{p-1}c_i(r\alpha,s \alpha)r^{i}s^{p-i},
\end{equation}
and $c_i(r\alpha,s\alpha)$ are certain rational expressions in $\F_{p}(\alpha)$.
We now prove that the rational expressions $b'_{r,s}(\alpha)$ are actually polynomials, and coincide with the polynomials
$b_{r,s}(\alpha)$ defined in Equation~\eqref{eq:b(ra,sa)} as long as $r+s\neq p$.

\begin{lemma}\label{lemma:product}
For integers $0<r,s<p$ with $r+s\neq p$ we have
\[
L_{p-1}^{(r\alpha)}(rX)\cdot
L_{p-1}^{(s\alpha)}(sX)
\equiv b_{r,s}(\alpha)\cdot L_{p-1}^{((r+s)\alpha)}\bigl((r+s)X\bigr)
\pmod{X^p-(\alpha^p-\alpha)},
\]
in $\F_{p}(\alpha)[X]$, where $b_{r,s}(\alpha)\in\F_{p}[\alpha]$ are the polynomials defined in Equation~\eqref{eq:b(ra,sa)}.
Furthermore, in the excluded case where $r+s=p$ we have
\[
L_{p-1}^{(r\alpha)}(rX)\cdot
L_{p-1}^{(-r\alpha)}(-rX)
\equiv 1-\alpha^{p-1}
%\equiv(1-\alpha^{p-1})\cdot L_{p-1}^{(0)}(0)
\pmod{X^p-(\alpha^p-\alpha)}.
\]
\end{lemma}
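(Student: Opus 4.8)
The plan is to compute the product $L_{p-1}^{(r\alpha)}(rX)\cdot L_{p-1}^{(s\alpha)}(sX)$ directly, using the explicit expression for the Laguerre polynomials, and then reduce modulo $X^p-(\alpha^p-\alpha)$. First I would write
\[
L_{p-1}^{(r\alpha)}(rX)=-\sum_{i=0}^{p-1}(r\alpha-1)_{p-1-i}\,(rX)^i,
\qquad
L_{p-1}^{(s\alpha)}(sX)=-\sum_{j=0}^{p-1}(s\alpha-1)_{p-1-j}\,(sX)^j,
\]
using the formula $L_{p-1}^{(\alpha)}(X)=-\sum_k(\alpha-1)_{p-1-k}X^k$ from Subsection~\ref{subsec:Laguerre}. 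Multiplying, the product is a polynomial of degree $\le 2p-2$ in $X$; grouping by powers $X^n$ with $n=i+j$, and then replacing $X^p$ by $\alpha^p-\alpha$ wherever it appears, collapses everything into a polynomial of degree $\le p-1$ in $X$ whose coefficients are polynomials in $\alpha$. I would then compare this with $b_{r,s}(\alpha)\cdot L_{p-1}^{((r+s)\alpha)}\bigl((r+s)X\bigr)$ coefficient by coefficient in $X$.

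The computation is driven by the comparison of constant terms (the $X^0$ coefficient). On the right-hand side the constant term is $b_{r,s}(\alpha)\cdot\bigl(1-((r+s)\alpha)^{p-1}\bigr)$ when $r+s\neq 0$ in $\F_p$, while on the left one gets contributions from the product of the $X^i$ term of the first factor with the $X^{p-i}$ term of the second (for $i=0,\dots,p-1$, and including the $X^0\cdot X^0$ term, which contributes $(r\alpha-1)_{p-1}(s\alpha-1)_{p-1}=(r^{p-1}\alpha^{p-1}-1)(s^{p-1}\alpha^{p-1}-1)$), all but one of those being multiplied by $\alpha^p-\alpha$ after the reduction. The key algebraic input is Theorem~\ref{thm:functional-Laguerre-simplified} in the form displayed just before the lemma's statement: it already tells us that $L_{p-1}^{(r\alpha)}(rX)\cdot L_{p-1}^{(s\alpha)}(sX)\equiv b'_{r,s}(\alpha)\cdot L_{p-1}^{((r+s)\alpha)}\bigl((r+s)X\bigr)$ for a single scalar $b'_{r,s}(\alpha)\in\F_p(\alpha)$, so it suffices to pin down that scalar. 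Evaluating both sides at $X=0$ after reduction, and using the observation recorded in Subsection~\ref{subsec:inverting} that for a positive integer $a$ the value $b_{r,s}(a)$ is the coefficient of $X^{p-1}$ in $(1+X/r)^{ra-1}(1-X/s)^{sa-1}$, I would identify $b'_{r,s}(\alpha)=b_{r,s}(\alpha)$ as polynomials in $\alpha$: both agree at infinitely many integer values $\alpha=a$, and $b_{r,s}(\alpha)$ has degree $\le p-1$. This simultaneously shows $b'_{r,s}(\alpha)$ is a polynomial, as claimed.

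For the excluded case $r+s=p$, i.e. $s\equiv-r$, the factor $L_{p-1}^{((r+s)\alpha)}\bigl((r+s)X\bigr)$ degenerates to $L_{p-1}^{(0)}(0)=E(0)=1$, so there is no $L_{p-1}^{(0)}$ factor carrying the $X$-dependence and the product must reduce to a constant (in $X$) modulo $X^p-(\alpha^p-\alpha)$. I would verify that this constant is $1-\alpha^{p-1}$ by the same constant-term computation: after replacing $X^p$ by $\alpha^p-\alpha$, the surviving scalar is read off as before, now using that $b_{r,-r}(\alpha)$ is the zero polynomial (already noted in Subsection~\ref{subsec:inverting}, via $(1+X/r)^{pa-2}$ having no degree-$p-1$ term) so that the "main" term drops out and one is left with the contribution $(1-r^{p-1}\alpha^{p-1})(1-(-r)^{p-1}\alpha^{p-1})$ together with the $\alpha^p-\alpha$ multiple of the remaining cross terms; these should combine to $1-\alpha^{p-1}$. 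The main obstacle I anticipate is purely bookkeeping: correctly tracking which cross-term products acquire the factor $\alpha^p-\alpha$ under the reduction and which do not, and confirming that all the coefficients of $X^1,\dots,X^{p-1}$ on the two sides match once the scalar is fixed — but since Theorem~\ref{thm:functional-Laguerre-simplified} already guarantees proportionality by a single scalar, the bulk of that check is subsumed, and only the constant term needs to be computed in full.
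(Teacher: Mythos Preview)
Your overall strategy matches the paper's: expand the product, reduce modulo $X^p-(\alpha^p-\alpha)$, invoke Theorem~\ref{thm:functional-Laguerre-simplified} to know the result is a scalar multiple of $L_{p-1}^{((r+s)\alpha)}\bigl((r+s)X\bigr)$, and then identify that scalar from a single coefficient in $X$. The difference is \emph{which} coefficient. For $r+s\neq p$ the paper compares the coefficient of $X^{p-1}$, not the constant term: since the product has degree $2p-2$, nothing of degree $\ge p$ reduces down to degree $p-1$, and the $X^{p-1}$ coefficient on the right is simply $-1$, so one reads off $b'_{r,s}(\alpha)=b_{r,s}(\alpha)$ directly from the defining sum~\eqref{eq:b(ra,sa)} with no division and no evaluation argument. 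For $r+s=p$ the paper does compare constant terms, as you propose, and completes the computation via $\binom{-r\alpha-1}{p-1}=1-\alpha^{p-1}$ together with the Vandermonde sum $\sum_k\binom{r\alpha-1}{p-1-k}\binom{-r\alpha}{k}=\binom{-1}{p-1}=1$.

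Your identification step for $r+s\neq p$, however, has a real gap. The constant term on the right is $b'_{r,s}(\alpha)\,(1-\alpha^{p-1})$, so recovering $b'_{r,s}(\alpha)$ requires division by $1-\alpha^{p-1}$; your plan to then match it against $b_{r,s}(a)$ at integer values $a$ fails because $1-a^{p-1}=0$ for every $a\in\F_p^\ast$, so the quotient cannot be evaluated precisely where you want to check it (and ``infinitely many integer values'' is not available over $\F_p$ in any case). The fact you invoke---that $b_{r,s}(a)$ is the $X^{p-1}$ coefficient of $(1+X/r)^{ra-1}(1-X/s)^{sa-1}$---is just the definition rephrased and does not connect to the constant-term computation. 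The constant-term route \emph{can} be salvaged by carrying out the division symbolically in $\F_p[\alpha]$ (the paper even remarks, after Lemma~\ref{lemma:b(ra,sa)_alt}, that it yields the alternate expression given there), but the cleanest fix is simply to switch to the $X^{p-1}$ coefficient as the paper does.
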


\begin{proof}
Expanding the product
$L_{p-1}^{(r\alpha)}(rX) \cdot L_{p-1}^{(s\alpha)}(sX)$
according to Theorem~\ref{thm:functional-Laguerre-simplified}
we have
\[
L_{p-1}^{(r\alpha)}(rX) \cdot L_{p-1}^{(s\alpha)}(sX)
=
\sum_{k=0}^{p-1}\sum_{h=0}^{p-1}\binom{r \alpha-1}{p-1-k}\binom{s\alpha-1}{p-1-h}
\frac{(-r)^k(-s)^h}{k!h!}X^{k+h}
\]
Separating the terms in the double sum according to whether the degree $k+h$ is less than $p$ or at least $p$, the sum of the former terms equals
\[
\sum_{t=0}^{p-1}\sum_{k=0}^{t}\binom{r \alpha-1}{p-1-k}\binom{s\alpha-1}{p-1-t+k}
\frac{(-1)^{t}r^{k}s^{t-k}}{k!(t-k)!}X^{t},
\]
having set $k+h=t$,
and the sum of the latter is congruent to
\[
-s(\alpha^p-\alpha)\sum_{t=0}^{p-2}\sum_{k=t+1}^{p-1}\binom{r \alpha-1}{p-1-k}
\binom{s\alpha-1}{k-t-1}
\frac{(-1)^{t}r^{k}s^{t-k}}{k!(p+t-k)!}X^{t}
\]
modulo $X^p-(\alpha^p-\alpha)$, having set $k+h=p+t$ and
taken $s^p\equiv s\pmod{p}$ into account.

Note that both sums are polynomials of degree less than $p$.
Hence the rational expression $b'_{r,s}(\alpha)$ of Equation~\ref{eq:b'(ra,sa)}
can now be found by equating the
coefficients of any suitable power of $X$ in
\[
b'_{r,s}(\alpha)\cdot L_{p-1}^{((r+s)\alpha)}\bigl((r+s)X\bigr)
=
b'_{r,s}(\alpha)\cdot\sum_{k=0}^{p-1}\binom{(r+s)\alpha-1}{p-1-k}\frac{\bigl(-(r+s)X\bigr)^k}{k!},
\]
and in the expression for $L_{p-1}^{(r\alpha)}(rX) \cdot L_{p-1}^{(s\alpha)}(sX)$ modulo $X^p-(\alpha^p-\alpha)$ found above.

Assuming $r+s\neq 0$ first, the coefficient of $X^{p-1}$ in the former equals $-b'_{r,s}(\alpha)$, and in the latter it equals
\[
\sum_{k=0}^{p-1}\binom{r\alpha-1}{p-1-k}\binom{s\alpha-1}{k}
\frac{r^{k}s^{p-1-k}}{k!(p-1-k)!}.
\]
Because $s^{p-1}\equiv 1\pmod{p}$ and
$1/\bigl(k!(p-1-k)!\bigr)\equiv -\binom{p-1}{k}\equiv -(-1)^k\pmod{p}$,
we conclude that
$b'_{r,s}(\alpha)=b_{r,s}(\alpha)$ in this case.

When $r+s=p$ the argument needs to be modified because
$L_{p-1}^{((r+s)\alpha)}\bigl((r+s)X\bigr)=L_{p-1}^{(0)}(0)=1$
has no term of degree $p-1$.
Hence we compare constant terms instead, which is a little more complicated.
The constant term in the reduced expression for
$L_{p-1}^{(r\alpha)}(rX)\cdot L_{p-1}^{(-r\alpha)}(-rX)$ found above equals
\[
\binom{r\alpha-1}{p-1}\binom{-r\alpha-1}{p-1}
+
r(\alpha^p-\alpha)\sum_{k=1}^{p-1}\binom{r\alpha-1}{p-1-k}
\binom{-r\alpha-1}{k-1}
\frac{(-1)^k}{k!(p-k)!}.
\]
To deal with the first isolated summand note that
$\binom{-r\alpha-1}{p-1}=1-\alpha^{p-1}$ in $\F_p[\alpha]$,
for example because its roots are $1/r,2/r,\ldots,(p-1)/r$, hence exactly all nonzero elements of $\F_p$,
and its leading term equals $(-r\alpha)^{p-1}/(p-1)!=\alpha^{p-1}$
(recalling that $p$ is odd throughout the paper).
The sum over $k$ can be transformed using
the congruence $k!(p-k)!\equiv (-1)^{k}k\pmod{p}$ for $0<k<p$,
and the standard binomial identity $\binom{n}{k}=\frac{n}{k}\binom{n-1}{k-1}$.
Altogether, the constant term in the reduced expression for
$L_{p-1}^{(r\alpha)}(rX)\cdot L_{p-1}^{(-r\alpha)}(-rX)$
found earlier can be written as
\[
(1-\alpha^{p-1})\sum_{k=0}^{p-1}\binom{r\alpha-1}{p-1-k}\binom{-r\alpha}{k}
=(1-\alpha^{p-1})\binom{-1}{p-1}
=1-\alpha^{p-1}.
\]
Comparing this with
$b'_{r,p-r}(\alpha)\cdot L_{p-1}^{(0)}(0)=b'_{r,p-r}(\alpha)$
at the other side, we find
$b'_{r,p-r}(\alpha)=1-\alpha^{p-1}$
as desired.
\end{proof}

An inductive application of Lemma~\ref{lemma:product} now yields
\begin{equation}\label{eq:power}
\bigl(L_{p-1}^{(\alpha)}(X)\bigr)^{j}
\equiv
\Bigl(\prod_{s=1}^{j-1} b_{1,s}(\alpha)\Bigr)
\cdot
L_{p-1}^{(j\alpha)}(jX)
\pmod{X^p-(\alpha^p-\alpha)},
\end{equation}
for $0<j<p$,
where the product over $s=1,\ldots,j-1$ is interpreted as $1$ when $j=1$.
After these preparations we are ready to prove Theorem~\ref{thm:inverse}.

\begin{proof}[Proof of Theorem~\ref{thm:inverse}]
 %\fbox{Adapted from the proof of Theorem~\ref{thm:cinverse} near end of paper}
Let $G^{(\alpha)}(X)$ be an arbitrary polynomial
of degree less than $p$ in $\F_{p}(\alpha)[X]$ satisfying
\begin{equation}\label{eq:GL}
G^{(\alpha)}\bigl(L_{p-1}^{(\alpha)}(X)\bigr)\equiv X \pmod{X^p-(\alpha^p-\alpha)}.
\end{equation}
From the specialization $\alpha=0$, where $L_{p-1}^{(0)}(X)=E(X)$ is the truncated exponential and hence
$-\pounds_1\bigl(L_{p-1}^{(0)}(X)\bigr)\equiv X \pmod{X^p}$
with
$\pounds_1(X)=\sum_{k=1}^{p-1}X^k/k$,
we already know that $G^{(0)}(X)=-\pounds_1(X)$
(see the discussion after Theorem~\ref{thm:inverse}.)
From this fact together with Equation~\eqref{eq:power} we expect a cleaner calculation if we write
$G^{(\alpha)}(X)$ directly in the form
\[
G^{(\alpha)}(X)=
c_0(\alpha)
-\sum_{j=1}^{p-1}\frac{c_j(\alpha)}{j} \frac{X^{j}}{\prod_{s=1}^{j-1}b_{1,s}(\alpha)},
\]
where $c_j(\alpha)\in\F_p(\alpha)$ are rational functions to be determined.
According to Equation~\eqref{eq:power} then Equation~\eqref{eq:GL} is equivalent to
\[
c_0(\alpha)
-\sum_{j=1}^{p-1}\frac{c_j(\alpha)}{j}
L_{p-1}^{(j\alpha)}(jX)
\equiv X
\pmod{X^p-(\alpha^p-\alpha)}.
\]

Because both sides are polynomials of degree less than $p$, the congruence must be an equality.
Equating the coefficients of like powers of $X$ on both sides, and because
\[
L_{p-1}^{(j\alpha)}(jX)
=-\sum_{k=0}^{p-1}(j\alpha-1)_{p-1-k}\cdot (jX)^k
\]
we find the system of linear equations
\begin{equation}\label{eq:system}
\sum_{j=1}^{p-1}
(j\alpha-1)_{p-1-k}\cdot j^{k-1}
\cdot c_j(\alpha)
=
\begin{cases}
-c_0(\alpha)&\text{if $k=0$}\\
1&\text{if $k=1$}\\
0&\text{if $1<k<p$}
\end{cases}
\end{equation}
for the expressions $c_j(\alpha)$,
over the field $\F_p(\alpha)$.

Because
$(j\alpha-1)_{p-1}=(j\alpha)^{p-1}-1=\alpha^{p-1}-1$,
the first equation can be written as
$(\alpha^{p-1}-1)\sum_{j=1}^{p-1}
j^{-1}\cdot c_j(\alpha)
=-c_0(\alpha)$.
As to the remaining equations, by successively adding to each equation a suitable linear combination of the equations which follow it,
starting from the end, we show that the system is equivalent to
\begin{equation}\label{eq:system_simpler}
\sum_{j=1}^{p-1}
j^{k-1}
\cdot c_j(\alpha)
=
\begin{cases}
c_0(\alpha)/(1-\alpha^{p-1})&\text{if $k=0$}\\
-1&\text{if $k=1$}\\
0&\text{if $1<k<p$}
\end{cases}
\end{equation}
The last equation in \eqref{eq:system}, namely for $k=p-1$, reads indeed
$\sum_{j=1}^{p-1}
j^{p-2}\cdot c_j(\alpha)=0$,
which is the last equation in~\eqref{eq:system_simpler}.
Now fix $0<k<p-1$, assume we have proved the conclusion for all equations with index higher than $k$
(that is, we have already obtained the last $p-1-k$ equations in~\eqref{eq:system_simpler}),
and consider the equation with index $k$ in~\eqref{eq:system}.
If we expand the expression
$(j\alpha-1)_{p-1-k}$
in its left-hand side as a polynomial in $\alpha$,
each term except for the constant term gives rise to a scalar multiple (with scalar in $\F_p(\alpha)$)
of the left-hand side of an equation in~\eqref{eq:system_simpler} with index larger than $k$.
The constant term in $(j\alpha-1)_{p-1-k}$
gives rise precisely to the left-hand side of the equation with index $k$ in~\eqref{eq:system_simpler},
multiplied by $(-1)^k(p-1-k)!$.

It is now an easy matter to see that the system~\eqref{eq:system_simpler} has the unique solution
$c_0(\alpha)=0$, and $c_j(\alpha)=1$ for $0<j<p$, as desired,
using the fact that
$\sum_{j=1}^{p-1}j^h$ is congruent to $-1$ modulo $p$ if $p-1$ divides $h$, and to $0$ otherwise.
One may also view $j$ as ranging over the multiplicative group $\F_p^\ast$,
interpret equations~\eqref{eq:system_simpler} as performing a discrete Fourier transform over $\F_p^\ast$,
and appropriately invert it.
\end{proof}

Now we prove that the left inverse $G^{(\alpha)}(X)$ for $L_{p-1}^{(\alpha)}(X)$ is also a right inverse, in the appropriate sense
stated in Corollary~\ref{cor:inverse}.

\begin{proof}[Proof of Corollary~\ref{cor:inverse}]
Some care is needed to transfer the problem to a setting where we can use associativity of composition
to pass from a unique left inverse to a right inverse, and a convenient setting is that of a ring of formal power series.

The modulus $X^p-(\alpha^p-\alpha)$ in the congruence of Theorem~\ref{thm:inverse} becomes a $p$th power
once we extend the field of coefficients to $\F_p(\alpha^{1/p})$.
Viewing $L_{p-1}^{(\alpha)}(X)$ and $G^{(\alpha)}(X)$
as polynomials in $\F_p(\alpha^{1/p})[X]$,
and replacing $X$ with $X+\alpha-\alpha^{1/p}$, the congruence of Theorem~\ref{thm:inverse}
is equivalent to
\[
G^{(\alpha)}\bigl(L_{p-1}^{(\alpha)}(X+\alpha-\alpha^{1/p})\bigr)-(\alpha-\alpha^{1/p})\equiv X \pmod{X^p}
\]
in the polynomial ring $\F_p(\alpha^{1/p})[X]$.

Set $\delta=L_{p-1}^{(\alpha)}(\alpha-\alpha^{1/p})\in\F_p(\alpha^{1/p})$.
The polynomial $L_{p-1}^{(\alpha)}(X+\alpha-\alpha^{1/p})-\delta$ has no constant term, and nonzero term of degree one.
Hence it has a compositional (bilateral) inverse $S(X)$ in the power series ring $\F_p(\alpha^{1/p})[[X]]$, meaning that
\begin{equation}\label{eq:left_inverse}
S\bigl(L_{p-1}^{(\alpha)}(X+\alpha-\alpha^{1/p})-\delta\bigr)=X
\end{equation}
and
\begin{equation}\label{eq:right_inverse}
L_{p-1}^{(\alpha)}\bigl(S(X)+\alpha-\alpha^{1/p}\bigr)-\delta=X
\end{equation}
in $\F_p(\alpha^{1/p})[[X]]$.
In particular, Equation~\eqref{eq:left_inverse} yields
\[
S\bigl(L_{p-1}^{(\alpha)}(X+\alpha-\alpha^{1/p})-\delta\bigr)\equiv X \pmod{X^p}
\]
in $\F_p(\alpha^{1/p})[[X]]$,
and because this congruence alone determines $S(X)$ modulo $X^p$ uniquely, we deduce
\begin{equation}\label{eq:S_equiv_G}
S(X)\equiv G^{(\alpha)}(X+\delta)-(\alpha-\alpha^{1/p})\pmod{X^p}.
\end{equation}

Viewing Equation~\eqref{eq:right_inverse} modulo $X^p$ and then taking Equation~\eqref{eq:S_equiv_G} into account we find
\[
L_{p-1}^{(\alpha)}\bigl(
G^{(\alpha)}(X+\delta)
\bigr)-\delta
\equiv X\pmod{X^p},
\]
in $\F_p(\alpha^{1/p})[[X]]$, but because this congruence involves only polynomials it already takes place in
in $\F_p(\alpha^{1/p})[X]$.
Substituting $X-\delta$ for $X$ we find
\[
L_{p-1}^{(\alpha)}\bigl(
G^{(\alpha)}(X)
\bigr)
\equiv X\pmod{X^p-L_{p-1}^{(\alpha^p)}(\alpha^p-\alpha)},
\]
and this takes place and holds in $\F_p(\alpha)[X]$, as desired.
\end{proof}

\section{Factorizations of the polynomials $b_{1,s}(\alpha)$}\label{sec:roots}

In this section we compute the full factorizations in $\F_p[\alpha]$ of the polynomials $b_{1,s}(\alpha)$,
which will give a more explicit description of the generalized truncated logarithm $G^{(\alpha)}(X)$.
The first step is establishing that all roots in a splitting field are simple and belong to the prime field $\F_p$.
One way to show that is by proving the identity of Lemma~\ref{lemma:b(a)b(-a)} below,
which we will do by suitable manipulations of products of Laguerre polynomials.

\begin{lemma}\label{lemma:b(a)b(-a)}
The polynomials $b_{1,s}(\alpha)$, for $0<s<p-1$, satisfy
\begin{equation*}
b_{1,s}(\alpha)\cdot b_{1,s}(-\alpha)=1-\alpha^{p-1}.
\end{equation*}
In particular,
$b_{1,s}(\alpha)$ has degree $(p-1)/2$,
and factorizes into a product of distinct linear factors in $\F_{p}[\alpha]$.
\end{lemma}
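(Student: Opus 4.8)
The plan is to reduce the identity $b_{1,s}(\alpha)\cdot b_{1,s}(-\alpha)=1-\alpha^{p-1}$ to an instance of the congruence already established in Lemma~\ref{lemma:product}, exploiting the product formula for powers of Laguerre polynomials. The key observation is that, by the convention $b_{rt,st}(\alpha)=b_{r,s}(t\alpha)$ recorded before Theorem~\ref{thm:inverse}, one has $b_{1,s}(-\alpha)=b_{-1,-s}(\alpha)$; so the product in question is $b_{1,s}(\alpha)\cdot b_{-1,-s}(\alpha)$, a product of two ``$b$'' polynomials whose first indices are $1$ and $-1$, and whose second indices are $s$ and $-s$. This strongly suggests pairing the Laguerre factors as $\bigl(L_{p-1}^{(\alpha)}(X)L_{p-1}^{(s\alpha)}(sX)\bigr)\cdot\bigl(L_{p-1}^{(-\alpha)}(-X)L_{p-1}^{(-s\alpha)}(-sX)\bigr)$ and regrouping them two different ways.

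First I would apply Lemma~\ref{lemma:product} in the case $r=1$, second index $s$ (with $1+s\neq p$ since $0<s<p-1$), obtaining
\[
L_{p-1}^{(\alpha)}(X)\cdot L_{p-1}^{(s\alpha)}(sX)\equiv b_{1,s}(\alpha)\cdot L_{p-1}^{((1+s)\alpha)}\bigl((1+s)X\bigr)\pmod{X^p-(\alpha^p-\alpha)},
\]
and likewise, replacing $X$ by $-X$ and $\alpha$ by $-\alpha$ (which multiplies the modulus by the nonzero scalar $-1$, hence leaves the congruence ideal unchanged),
\[
L_{p-1}^{(-\alpha)}(-X)\cdot L_{p-1}^{(-s\alpha)}(-sX)\equiv b_{1,s}(-\alpha)\cdot L_{p-1}^{(-(1+s)\alpha)}\bigl(-(1+s)X\bigr)\pmod{X^p-(\alpha^p-\alpha)}.
\]
Multiplying these two congruences and then invoking the excluded case of Lemma~\ref{lemma:product} with $r=1+s$ (so $r+s' = p$ in that lemma's notation, applied to the pair of arguments $(1+s)X$ and $-(1+s)X$) gives that the product of the two right-most Laguerre factors is congruent to $1-\alpha^{p-1}$. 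Hence the left-hand side is congruent to $b_{1,s}(\alpha)\cdot b_{1,s}(-\alpha)\cdot(1-\alpha^{p-1})$ modulo $X^p-(\alpha^p-\alpha)$. On the other hand, regrouping the same four Laguerre factors as $\bigl(L_{p-1}^{(\alpha)}(X)L_{p-1}^{(-\alpha)}(-X)\bigr)\cdot\bigl(L_{p-1}^{(s\alpha)}(sX)L_{p-1}^{(-s\alpha)}(-sX)\bigr)$ and applying the excluded case of Lemma~\ref{lemma:product} to each factor separately yields $(1-\alpha^{p-1})^2$. Comparing the two evaluations gives $b_{1,s}(\alpha)\cdot b_{1,s}(-\alpha)\cdot(1-\alpha^{p-1})\equiv(1-\alpha^{p-1})^2$; since these are polynomials in $\alpha$ of degree less than $p$ in $X$ the congruence is an equality in $\F_p(\alpha)[X]$ with no $X$ dependence, i.e.\ an equality in $\F_p(\alpha)$, and cancelling the nonzero factor $1-\alpha^{p-1}$ gives the claimed identity.

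For the final sentence: since $b_{1,s}(\alpha)$ has constant term $1$ (as noted after Equation~\eqref{eq:b(ra,sa)}, using $1+s\neq p$), the identity $b_{1,s}(\alpha)b_{1,s}(-\alpha)=1-\alpha^{p-1}=\prod_{c\in\F_p^\ast}(\alpha-c)$ forces $b_{1,s}(\alpha)$ to be a product of distinct linear factors $\alpha-c$ with $c\in\F_p^\ast$, no such factor occurring in both $b_{1,s}(\alpha)$ and $b_{1,s}(-\alpha)$; since replacing $\alpha$ by $-\alpha$ permutes $\F_p^\ast$ without fixed points, exactly half of the factors go to each, so $\deg b_{1,s}(\alpha)=(p-1)/2$ and it is separable. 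The main obstacle I anticipate is purely bookkeeping: making sure the substitutions $X\mapsto -X$, $X\mapsto(1+s)X$ and the accompanying scalings of $\alpha$ interact correctly with the modulus $X^p-(\alpha^p-\alpha)$ (each scaling by $t\in\F_p^\ast$ multiplies the modulus by $t$, hence does not change the ideal), and keeping the two regroupings of the same four-fold product aligned so that the final cancellation is legitimate.
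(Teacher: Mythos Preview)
Your argument is correct and in fact cleaner than the paper's. Both proofs rest entirely on Lemma~\ref{lemma:product}, but the paper proceeds by induction on $s$: it computes $\bigl(L_{p-1}^{(\alpha)}(X)\,L_{p-1}^{(-\alpha)}(-X)\bigr)^{s+1}$ two ways, one of which introduces the full products $\prod_{t=1}^{s}b_{1,t}(\pm\alpha)$ via Equation~\eqref{eq:power}, and then cancels all but the last factor using the inductive hypothesis. You instead work directly with the four-fold product $L_{p-1}^{(\alpha)}(X)\,L_{p-1}^{(s\alpha)}(sX)\,L_{p-1}^{(-\alpha)}(-X)\,L_{p-1}^{(-s\alpha)}(-sX)$ and regroup it two ways, so that only $b_{1,s}(\pm\alpha)$ ever appears and no induction is needed. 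The advantage of your route is that it treats each $s$ independently and avoids the telescoping; the paper's route, on the other hand, reuses Equation~\eqref{eq:power}, which is needed elsewhere anyway. Two minor remarks: the line $1-\alpha^{p-1}=\prod_{c\in\F_p^\ast}(\alpha-c)$ is off by a sign (the right-hand side is $\alpha^{p-1}-1$), though this does not affect your conclusion about the roots; and your bookkeeping about the modulus is exactly right, since replacing $(X,\alpha)$ by $(tX,t\alpha)$ for $t\in\F_p^\ast$ multiplies $X^p-(\alpha^p-\alpha)$ by $t$.
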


\begin{proof}
We prove the claimed equation by induction on $s$,
using the congruences of Lemma~\ref{lemma:product} and omitting the modulus $X^{p}-(\alpha^{p}-\alpha)$ for conciseness.
The case $s=1$ follows by computing
\begin{align*}
(1-\alpha^{p-1})^2
&\equiv
\bigl(L_{p-1}^{(\alpha)}(X)\cdot L_{p-1}^{(-\alpha)}(-X)\bigr)^2
\\&=
L_{p-1}^{(\alpha)}(X)^2\cdot L_{p-1}^{(-\alpha)}(-X)^2
\\&\equiv
b_{1,1}(\alpha)\cdot L_{p-1}^{(2\alpha)}(2X)
\cdot b_{1,1}(-\alpha)\cdot L_{p-1}^{(-2\alpha)}(-2X)
\\&=
b_{1,1}(\alpha)\cdot b_{1,1}(-\alpha)\cdot (1-\alpha^{p-1}).
\end{align*}
Now let $1<s<p-1$ and assume we have proved
$b_{1,t}(\alpha)\cdot b_{1,t}(-\alpha)=1-\alpha^{p-1}$
for $0<t<s$. Then the induction step follows by computing
\begin{align*}
(1-\alpha^{p-1})^{s+1}
&\equiv
\bigl(L_{p-1}^{(\alpha)}(X)\cdot L_{p-1}^{(-\alpha)}(-X)\bigr)^{s+1}
\\&=
L_{p-1}^{(\alpha)}(X)^{s+1}\cdot L_{p-1}^{(-\alpha)}(-X)^{s+1}
\\&\equiv
\Bigl(\prod_{t=1}^{s}b_{1,t}(\alpha)\Bigr)
\cdot L_{p-1}^{((s+1)\alpha)}((s+1)X)
\\&\quad
\cdot\Bigl(\prod_{t=1}^{s}b_{1,t}(-\alpha)\Bigr)
\cdot L_{p-1}^{(-(s+1)\alpha)}(-(s+1)X)
\\&=
b_{1,s}(\alpha)\cdot b_{1,s}(-\alpha)\cdot(1-\alpha^{p-1})^s.
\end{align*}
The statement on the factorization of $b_{1,s}(\alpha)$ follows at once.
\end{proof}

Theorem~\ref{thm:inverse-more} follows at once.
Lemma~\ref{lemma:b(a)b(-a)} also tells us that exactly one of each pair of opposite elements of $\F_p^\ast$
is a root of a given $b_{1,s}(\alpha)$.
The following result determines precisely which of them.

\begin{theorem}\label{thm:roots}
Let $0<s<p-1$, and let $0<a,a'<p$ be integers such that $a'\equiv sa\pmod{p}$.
Then $b_{1,s}(a)=0$ if and only if $a+a'<p$.
\end{theorem}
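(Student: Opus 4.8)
The plan is to exploit the combinatorial reinterpretation of $b_{1,s}(a)$ established just before Theorem~\ref{thm:inverse}: for a positive integer $a$, $b_{1,s}(a)$ equals the coefficient of $X^{p-1}$ in the polynomial $(1+X)^{a-1}(1-X/s)^{sa-1}\in\F_p[X]$. (I have taken $r=1$; the factor $r^{p-1}\equiv1$ causes no trouble.) So the first step is to rewrite this coefficient in a form where the condition $a+a'<p$ visibly enters. Writing $s^{-1}$ for the inverse of $s$ in $\F_p^\ast$ and substituting $X\mapsto -sX$ (a nonzero scaling, harmless up to a unit), one is led to examine the coefficient of $X^{p-1}$ in $(1-sX)^{a-1}(1+X)^{sa-1}$, or symmetrically to keep both factors and just track degrees carefully. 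The key point is that $sa\equiv a'\pmod p$ with $0<a'<p$, so $sa-1$ equals $a'-1$ modulo $p$ but as an \emph{integer exponent} it is much larger; the interplay between the true integer exponent and its residue is exactly what the Lucas-type reduction will handle.

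The main mechanical step is to reduce the exponents modulo $p$ using the Frobenius identity $(1+X)^p=1+X^p$ in $\F_p[X]$. Write $a-1$ and $sa-1$ in base $p$; since $0<a<p$ we have $a-1<p$ already, so $(1+X)^{a-1}$ is genuinely a polynomial of degree $a-1<p$. For the second factor, write $sa-1 = qp + (a'-1)$ where $q=\lfloor sa/p\rfloor = (sa-a')/p \ge 0$ (using $a'\equiv sa$ and $0<a'<p$); then $(1-X/s)^{sa-1}=(1-X/s)^{a'-1}\cdot\bigl((1-X^p/s)^{q}\bigr)$ modulo $p$, where the second factor contributes only degrees that are multiples of $p$. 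Therefore the coefficient of $X^{p-1}$ in the product picks up \emph{only} the $q=0$-slice from the high part when $a-1+(a'-1)<p$, i.e.\ exactly when $a+a'<p$; in that regime $b_{1,s}(a)$ is the coefficient of $X^{p-1}$ in the honest polynomial $(1+X)^{a-1}(1-X/s)^{a'-1}$ of degree $(a-1)+(a'-1)=a+a'-2<p-1$, which is therefore zero. Conversely, when $a+a'\ge p$ one must show the coefficient is nonzero: here $q\ge 1$, and one needs the term $X^{p}$ coming from $(1-X^p/s)^q$ to combine with a degree-$(-1)$... — more precisely, one extracts the contribution $X^{p-1}=X^{j}\cdot X^{p-(j+1)}$ with the $X^p$-block absent but now the available exponents reach high enough; concretely the coefficient becomes (up to the unit $(-1/s)^{\text{something}}$) a binomial coefficient of the shape $\binom{a-1}{\,a+a'-p-1\,}$ or the coefficient of $X^{p-1}$ in $(1+X)^{a-1}(1-X/s)^{a'-1}$ \emph{after} accounting for one $X^p$ factor, and one checks this is a product/sum of binomial coefficients that is nonzero mod $p$ by Lucas, using $0<a,a'<p$ and $a+a'-p>0$.

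The step I expect to be the genuine obstacle is the converse direction: showing $b_{1,s}(a)\ne0$ when $a+a'\ge p$, where one no longer gets a free degree count and must actually evaluate a binomial-type coefficient modulo $p$ and prove it is nonzero. Here I would pin down the coefficient explicitly — it should simplify, after the Frobenius reduction, to something like $\binom{a-1}{a+a'-p-1}$ times a nonzero scalar (or a single-term sum), the point being that $a+a'-p-1$ lies strictly between $0$ and $a-1$ precisely when $p<a+a'$ and $a'<p$ — and then invoke that a binomial $\binom{m}{j}$ with $0\le j\le m<p$ is never divisible by $p$. An alternative, possibly cleaner, route is to combine Lemma~\ref{lemma:b(a)b(-a)}, which already tells us $b_{1,s}(\alpha)$ has simple roots and that exactly one of each pair $\{a,p-a\}$ is a root: then it suffices to prove the \emph{one} implication "$a+a'<p \Rightarrow b_{1,s}(a)=0$" by the degree argument above, and the reverse implication follows automatically because replacing $a$ by $p-a$ replaces $a'$ by $p-a'$ and swaps "$a+a'<p$" with "$a+a'>p$" (noting $a+a'=p$ is impossible for $0<s<p-1$, consistent with $b_{1,p-1}$ being the zero polynomial being excluded). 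This reduction to a single degree-count is what I would write up.
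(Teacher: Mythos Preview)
Your final plan is correct and coincides with the paper's proof: establish the forward implication $a+a'<p \Rightarrow b_{1,s}(a)=0$ by the Frobenius reduction $(1-X/s)^{sa-1}=(1-X/s)^{a'-1}(1-X^p/s)^q$ and the degree bound $a+a'-2<p-1$, then obtain the converse by applying this to $p-a$ (whose partner is $p-a'$) and invoking Lemma~\ref{lemma:b(a)b(-a)} to conclude that at most one of $a$, $p-a$ can be a root. Your exploratory attempt at a direct nonvanishing argument for $a+a'>p$ is unnecessary and can be dropped; the reduction to the single degree-count is exactly what the paper writes up.
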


Note that we cannot have $a+a'=p$ because we have assumed $s\not\equiv -1\pmod{p}$, and so
either $a+a'<p$ or $a+a'>p$.

\begin{proof}
Suppose first that $a+a'<p$.
We have already mentioned that $b_{1,s}(a)$ equals the coefficient
of $X^{p-1}$ in the product $(1+X)^{a-1}(1-X/s)^{sa-1}\in\F_p[X]$.
Writing $sa=a'+kp$ for some integer $k\ge 0$ we may write
\[
(1+X)^{a-1}(1-X/s)^{sa-1}=
(1+X)^{a-1}(1-X/s)^{a'-1}\bigl(1-(X/s)^p\bigr)^k
\]
in $\F_p[X]$.
Hence $b_{1,s}(a)$ equals the coefficient
of $X^{p-1}$ in the product $(1+X)^{a-1}(1-X/s)^{a'-1}$,
which is zero because this polynomial has degree $a+a'-2<p-2$.

Now suppose that
$a+a'>p$.
Then $(p-a)+(p-a')<p$, applying the implication already proved to $p-a$ in place of $a$ and $p-a'$ in place of $a'$
shows $b_{1,s}(p-a)=0$,
which is the same as
$b_{1,s}(-a)=0$.
According to Lemma~\ref{lemma:b(a)b(-a)} this implies $b_{1,s}(a)\neq 0$, which is the desired conclusion.
\end{proof}

\begin{rem}\label{rem:Lucas}
Using Lucas' theorem on binomial coefficients modulo $p$, Theorem~\ref{thm:roots} admits the following equivalent formulation,
which may in some cases be more convenient to apply.
Let $0<s<p-1$ and $0<a<p$.
Then $b_{1,s}(a)=0$ if and only if $p$ does not divide the binomial coefficient $\binom{a+sa}{a}$.
\end{rem}

Theorem~\ref{thm:roots} gives an explicit way of writing out the complete factorization of each $b_{1,s}(\alpha)$.
For example, when $s=1$ in the notation of Theorem~\ref{thm:roots} we have $a'=a$, and hence the condition $a+a'<p$
reads $a\le(p-1)/2$.
Knowing that the polynomial has constant term $1$ yields
$b_{1,1}(\alpha)=\prod_{a=1}^{(p-1)/2}(1-\alpha/a)$.
The equivalent expression
$b_{1,1}(\alpha)=(-1)^{(p-1)/2}\binom{\alpha-1}{(p-1)/2}$
could also be obtained directly by recognizing an alternating sign Chu-Vandermonde convolution in the defining Equation~\eqref{eq:b(ra,sa)}.
The latter route is not an option when $s=2$, in which case Theorem~\ref{thm:roots} easily yields
\[
b_{1,2}(\alpha)=
\biggl(\prod_{0<a<p/3}(1-\alpha/a)\biggr)
\cdot
\biggl(\prod_{p/2<a<2p/3}(1-\alpha/a)\biggr).
\]

For a further example consider the case of $s=(p-1)/2$.
In the notation of Theorem~\ref{thm:roots},
when $a$ is odd we have $a'=(p-a)/2$, whence $a+a'=(p+a)/2<p$, and
when $a$ is even we have $a'=p-a/2$, whence $a+a'=p+a/2>p$.
Consequently, the roots of $b_{1,(p-1)/2}(\alpha)$ are $1,3,5,\ldots,p-2$
(viewed as elements of $\F_p$), and hence
\[
b_{1,(p-1)/2}(\alpha)
=(-1)^{\frac{p+1}{2}}\binom{(\alpha-1)/2}{(p-1)/2}.
\]

As a more substantial application of Theorem~\ref{thm:roots}, we next use it to establish a symmetry property of the polynomials $b_{1,s}(\alpha)$.

\begin{cor}\label{cor:symmetry}
For $0<s<p-1$ we have
$b_{1,s}(\alpha)=b_{1,p-1-s}(\alpha)$.
\end{cor}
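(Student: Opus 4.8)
The plan is to deduce the identity from the description of the root sets of $b_{1,s}(\alpha)$ provided by Theorem~\ref{thm:roots}. By Lemma~\ref{lemma:b(a)b(-a)}, applied to both $s$ and $p-1-s$ (which both lie in the range $0<\cdot<p-1$), each of $b_{1,s}(\alpha)$ and $b_{1,p-1-s}(\alpha)$ is a product of $(p-1)/2$ distinct linear factors in $\F_p[\alpha]$; moreover each has constant term $1$, by the computation following Equation~\eqref{eq:b(ra,sa)}, since neither $s$ nor $p-1-s$ is congruent to $-1$ modulo $p$. A polynomial with these two properties is determined by its set of roots: if that set is $\{a_1,\dots,a_{(p-1)/2}\}\subseteq\F_p^\ast$, then the polynomial equals $\prod_i(1-\alpha/a_i)$. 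Hence it suffices to prove that $b_{1,s}(\alpha)$ and $b_{1,p-1-s}(\alpha)$ have the same roots in $\F_p^\ast$.

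To that end, fix an integer $0<a<p$, and let $0<a'<p$ and $0<a''<p$ be the integers with $a'\equiv sa$ and $a''\equiv(p-1-s)a\pmod p$; note that $a''\equiv-(a+a')\pmod p$. By Theorem~\ref{thm:roots}, $a$ is a root of $b_{1,s}(\alpha)$ precisely when $a+a'<p$, and $a$ is a root of $b_{1,p-1-s}(\alpha)$ precisely when $a+a''<p$; by the remark immediately following Theorem~\ref{thm:roots} we have $a+a'\neq p$ and $a+a''\neq p$, since $s\not\equiv-1$ and $p-1-s\not\equiv-1\pmod p$. It therefore remains only to verify that $a+a'<p$ if and only if $a+a''<p$, which is a short computation with residues.

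Indeed, if $a+a'<p$ then $0<p-(a+a')<p$, so the congruence $a''\equiv-(a+a')\pmod p$ forces $a''=p-a-a'$, whence $a+a''=p-a'<p$. Conversely, if $a+a'>p$ then $0<2p-(a+a')<p$, so $a''=2p-a-a'$, whence $a+a''=2p-a'>p$. Thus $a$ is a root of $b_{1,s}(\alpha)$ exactly when it is a root of $b_{1,p-1-s}(\alpha)$, so the two polynomials have the same roots, and consequently $b_{1,s}(\alpha)=b_{1,p-1-s}(\alpha)$. There is no real obstacle here beyond keeping track of which multiple of $p$ must be subtracted in order to land in the range $(0,p)$; the substantive content is entirely carried by Theorem~\ref{thm:roots} and Lemma~\ref{lemma:b(a)b(-a)}.
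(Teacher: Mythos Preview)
Your proof is correct and follows essentially the same approach as the paper: reduce to matching root sets via Lemma~\ref{lemma:b(a)b(-a)} and the constant-term computation, then use Theorem~\ref{thm:roots} together with the observation $a''\equiv -(a+a')\pmod{p}$. The only cosmetic difference is that the paper argues one direction and appeals to symmetry, whereas you handle both cases $a+a'<p$ and $a+a'>p$ explicitly.
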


\begin{proof}
Because the two polynomials have the same constant term $1$, and each has distinct roots, it is sufficient to show that those roots are the same.
Also, by symmetry reasons it is sufficient to show that if $b_{1,s}(a)=0$ for some $0<a<p$, then $b_{1,p-1-s}(a)=0$.
In fact, according to Theorem~\ref{thm:roots} the latter condition holds exactly when $a+a'<p$,
where $0<a'<p$ is uniquely determined by $a'\equiv sa\pmod{p}$.
But then $p-a-a'\equiv (p-1-s)a\pmod{p}$ and $0<p-a-a'<p$, and because $0<a+(p-a-a')=p-a'<p$
another application of Theorem~\ref{thm:roots} yields $b_{1,p-1-s}(a)=0$, as desired.
\end{proof}

\begin{rem}
With the alternate interpretation of Remark~\ref{rem:Lucas}, one can also prove Corollary~\ref{cor:symmetry} by noting that
$\binom{a+(p-1-s)a}{a}=\binom{pa-sa}{a}\equiv\binom{-sa}{a}\pmod{p}$, and then
$
\binom{-sa}{a}=
(-1)^a\binom{a+sa-1}{a}=
(-1)^a\frac{s}{s+1}\binom{a+sa}{a}.
$
\end{rem}

We can also use Theorem~\ref{thm:roots} to compute the coefficient of $X^{p-1}$ in $G^{(\alpha)}(X)$, which equals the reciprocal of the product
$\prod_{s=1}^{p-2}b_{1,s}(\alpha)$.

\begin{cor}\label{cor:product}
We have
\[
\prod_{s=1}^{p-2}b_{1,s}(\alpha)=\prod_{k=2}^{p-1}(1+\alpha/k)^{k-1}
=\frac{L_{p-1}^{(\alpha^p)}(\alpha^p-\alpha)}{1-\alpha^{p-1}}.
\]
\end{cor}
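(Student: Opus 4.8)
The plan is to prove the two claimed equalities separately. For the first one, $\prod_{s=1}^{p-2}b_{1,s}(\alpha)=\prod_{k=2}^{p-1}(1+\alpha/k)^{k-1}$, the strategy is to count, for each $k$ with $2\le k\le p-1$, how many of the factors $b_{1,s}(\alpha)$ (with $1\le s\le p-2$) have $-k$ as a root; by Lemma~\ref{lemma:b(a)b(-a)} each $b_{1,s}$ is a product of distinct linear factors whose roots lie in $\F_p^\ast$, so the multiplicity of the root $-k$ in $\prod_s b_{1,s}(\alpha)$ is exactly this count, and since the left-hand side has constant term $1$ (each $b_{1,s}(0)=1$) this determines the product completely. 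Fix $a$ with $0<a<p$; by Theorem~\ref{thm:roots}, $b_{1,s}(a)=0$ iff $a+a'<p$ where $0<a'<p$ and $a'\equiv sa\pmod p$. As $s$ runs over $1,\dots,p-2$, the residue $a'\equiv sa$ runs over all of $\{1,\dots,p-1\}$ except the value congruent to $-a$ (i.e.\ $a'=p-a$, which is excluded since $s\ne p-1$); for each such $a'$ with $0<a'<p$ the condition $a+a'<p$ holds precisely when $a'<p-a$, i.e.\ for $a'\in\{1,\dots,p-a-1\}$, giving exactly $p-a-1$ admissible values of $s$. So the root $a$ of $\prod_{s=1}^{p-2}b_{1,s}(\alpha)$ has multiplicity $p-a-1$. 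Writing $a=p-k$ (so $k$ ranges over $1,\dots,p-1$), the root $-k$ has multiplicity $k-1$, which is $0$ when $k=1$; hence $\prod_{s=1}^{p-2}b_{1,s}(\alpha)=\prod_{k=2}^{p-1}(1+\alpha/k)^{k-1}$, matching constant terms.

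For the second equality, $\prod_{k=2}^{p-1}(1+\alpha/k)^{k-1}=L_{p-1}^{(\alpha^p)}(\alpha^p-\alpha)/(1-\alpha^{p-1})$, I would invoke the factorization~\eqref{eq:L^p}, namely $L_{p-1}^{(\alpha^p)}(\alpha^p-\alpha)=\prod_{k=1}^{p-1}(1+\alpha/k)^k$ (cited from~\cite[Lemma~1]{AviMat:Laguerre}). Dividing out, $L_{p-1}^{(\alpha^p)}(\alpha^p-\alpha)/\prod_{k=2}^{p-1}(1+\alpha/k)^{k-1}=\prod_{k=1}^{p-1}(1+\alpha/k)^k\big/\prod_{k=2}^{p-1}(1+\alpha/k)^{k-1}=(1+\alpha)^1\cdot\prod_{k=2}^{p-1}(1+\alpha/k)=\prod_{k=1}^{p-1}(1+\alpha/k)$. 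Finally $\prod_{k=1}^{p-1}(1+\alpha/k)$ is a polynomial of degree $p-1$ with constant term $1$ whose roots are $-1,-2,\dots,-(p-1)$, i.e.\ all nonzero elements of $\F_p$, so it equals $1-\alpha^{p-1}$ (comparing leading coefficients: its leading term is $\alpha^{p-1}/(p-1)!=\alpha^{p-1}$ by Wilson, matching). This gives the second equality.

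I expect the only delicate point to be the combinatorial bookkeeping in the first step: correctly handling the excluded value $a'=p-a$ (which is where $s\equiv -1$ would occur, and is why the hypothesis $s<p-1$ matters), and making sure the count $p-a-1$ is reindexed accurately to the exponent $k-1$ attached to $(1+\alpha/k)$. Everything after that is a routine manipulation of the product formula~\eqref{eq:L^p} and the standard fact that $\prod_{c\in\F_p^\ast}(\alpha-c)=\alpha^{p-1}-1$. One could alternatively bypass Theorem~\ref{thm:roots} entirely and derive the first equality from Corollary~\ref{cor:symmetry} together with an evaluation of $\prod_s b_{1,s}$ at generic integer $\alpha=a$ via the $X^{p-1}$-coefficient description, but the root-counting argument above is cleaner and self-contained given what has already been established.
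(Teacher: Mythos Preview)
Your proposal is correct and follows essentially the same approach as the paper's own proof: both fix $a\in\{1,\dots,p-1\}$, use Theorem~\ref{thm:roots} to count that exactly $p-a-1$ of the factors $b_{1,s}(\alpha)$ vanish at $a$, and then invoke Equation~\eqref{eq:L^p} for the second equality. Your write-up is in fact slightly more explicit than the paper's, spelling out the reindexing $a=p-k$, the constant-term normalization, and the identification $\prod_{k=1}^{p-1}(1+\alpha/k)=1-\alpha^{p-1}$, all of which the paper leaves to the reader.
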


\begin{proof}
Fix $0<a<p$.
As $s$ ranges over $0<s<p-1$, the residue class of $sa$ ranges over all residue classes modulo $p$ with the exception of the classes of $0$ and of $-a$.
Consequently, the conditions $a'\equiv sa\pmod{p}$ and $0<a'<p-a$ of Theorem~\ref{thm:roots}
will be simultaneously satisfied for exactly $p-a-1$ values of $s$.
This means that $a$ will be a root, and a single root as we know, of exactly $p-a-1$ of the factors in the product $\prod_{s=1}^{p-2}b_{1,s}(\alpha)$.
This proves the first claimed equality.
The second equality follows from Equation~\eqref{eq:L^p}, which was proved in~\cite[Lemma~1]{AviMat:Laguerre}.
\end{proof}

In the proof of Theorem~\ref{thm:roots} we have used the fact that
if $a$ is positive integer then $b_{r,s}(a)$ equals the coefficient of $X^{p-1}$ in the polynomial
$(1+X/r)^{ra-1}(1-X/s)^{sa-1}\in\F_p[X]$.
We conclude this section with noting that a similar fact holds true for $b_{r,s}(\alpha)$ as a polynomial, provided that we make sense of powers with
exponents $r\alpha-1$ and $s\alpha-1$.

The obstacle here is that $(1+X/r)^{r\alpha-1}$ does not have a meaning over the field $\F_p$ if $\alpha$ is an indeterminate.
If $C$ is any commutative ring containing $\Q$ as a subring,
then for any $c\in C$ we have a binomial series defined as
\[
(1+X)^c
=\sum_{k=0}^{\infty}\binom{c}{k}X^k
\in C[[X]].
\]
This does not generally make sense in prime characteristic $p$ because the binomial coefficients
$\binom{c}{k}=c(c-1)\cdots(c-k+1)/k!$ involve division by $p$ when $k\ge p$.
One way to find a useable substitute is by truncating the series before its term of degree $p$.
Limiting ourselves, for simplicity, to coefficients in the field of rational functions $\F_p(\alpha)$,
for each $f(\alpha)\in\F_p(\alpha)$ we define a polynomial
\[
(1+X)_\ast^{f(\alpha)}
=\sum_{k=0}^{p-1}\binom{f(\alpha)}{k}X^k
\in\F_p(\alpha)[X].
\]
Then for $f(\alpha),g(\alpha)\in\F_p(\alpha)$ we have
\[
(1+X)_\ast^{f(\alpha)+g(\alpha)}\equiv(1+X)_\ast^{f(\alpha)}(1+X)_\ast^{g(\alpha)}\pmod{X^p}
\]
in $\F_p(\alpha)[X]$.
Also, taking the derivative with respect to $X$ of such a product of truncated binomials,
or of variations such as $(1+bX)^{f(\alpha)}$, satisfies Leibniz rule modulo $X^p$.
(In formal algebraic terms this means that the standard derivation $d/dX$ of the polynomial ring $\F_p(\alpha)[X]/(X^p)$
induces a derivation of its quotient ring $\F_p(\alpha)[X]/(X^p)$,
because it maps the factored ideal $(X^p)$ into itself.)
However, we have
\[
\frac{d}{dX}
(1+X)_\ast^{f(\alpha)}
=f(\alpha)(1+X)_\ast^{f(\alpha)-1}+\bigl(f(\alpha)^p-f(\alpha)\bigr)X^{p-1}.
\]

With this definition, the expression
$b_{r,s}(\alpha)$ equals the coefficient of $X^{p-1}$ in the polynomial
$(1+X/r)_\ast^{r\alpha-1}(1-X/s)_\ast^{s\alpha-1}$.
As a simple application, we have
\[
b_{r,r}(\alpha)
=
(-1)^{\frac{p-1}{2}}\binom{r\alpha-1}{\frac{p-1}{2}},
\]
because this is the coefficient of $X^{p-1}$ in $(1-X^2/r^2)^{r\alpha-1}$.
A less trivial application gives the following alternate expression for the polynomials $b_{r,s}(\alpha)$.

\begin{lemma}\label{lemma:b(ra,sa)_alt}
If $r+s\neq p$ then
\begin{equation*}
b_{r,s}(\alpha)=\sum_{k=0}^{p-1}(-r/s)^k\binom{r\alpha -1}{p-1-k}\binom{s \alpha}{k}.
\end{equation*}
\end{lemma}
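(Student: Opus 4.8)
The plan is to identify both sides of the asserted identity with coefficients of $X^{p-1}$ in products of the truncated binomials $(1+bX)_\ast^{f(\alpha)}$ recalled just before the lemma, to reduce the claim to the vanishing of a single polynomial in $\alpha$ of degree at most $p-2$, and to establish that vanishing by specializing $\alpha$ to integers. For the reduction, recall that $b_{r,s}(\alpha)$ is the coefficient of $X^{p-1}$ in $(1+X/r)_\ast^{r\alpha-1}(1-X/s)_\ast^{s\alpha-1}$; the same expansion with $s\alpha-1$ replaced by $s\alpha$ (the powers of $r$ again combining to $r^{1-p}=1$) identifies the claimed right-hand side with the coefficient of $X^{p-1}$ in $(1+X/r)_\ast^{r\alpha-1}(1-X/s)_\ast^{s\alpha}$. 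Using multiplicativity of truncated binomials modulo $X^p$ in the variant $(1-X/s)_\ast^{f+g}\equiv(1-X/s)_\ast^f(1-X/s)_\ast^g$, in particular $(1-X/s)_\ast^{s\alpha}\equiv(1-X/s)_\ast^{s\alpha-1}\cdot(1-X/s)\pmod{X^p}$, subtracting shows that the lemma is equivalent to the vanishing of
\[
D(\alpha):=\bigl[X^{p-2}\bigr]\bigl((1+X/r)_\ast^{r\alpha-1}(1-X/s)_\ast^{s\alpha-1}\bigr),
\]
which, up to a nonzero scalar, is $\sum_{k=0}^{p-1}(-r/s)^k\binom{r\alpha-1}{p-2-k}\binom{s\alpha-1}{k}$ (this can equally be obtained directly from Pascal's rule $\binom{s\alpha}{k}=\binom{s\alpha-1}{k}+\binom{s\alpha-1}{k-1}$), and is in particular a polynomial in $\alpha$ over $\F_p$ of degree at most $p-2$. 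It therefore suffices to prove $D(a)=0$ for $p-1$ distinct elements $a\in\F_p$, and I would use the residues modulo $p$ of the positive integers not divisible by $p$.

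Fix such a positive integer $a$. Since truncating at degree $p$ does not alter the coefficient of $X^{p-2}$, the value $D(a)$ equals the coefficient of $X^{p-2}$ in the honest polynomial $F_a:=(1+X/r)^{ra-1}(1-X/s)^{sa-1}\in\F_p[X]$. Differentiating $F_a$ and multiplying through by $(r+X)(s-X)=rs+(s-r)X-X^2$ (which is $rs(1+X/r)(1-X/s)$), then factoring $F_a$ out of the result, gives the polynomial identity
\[
\bigl(rs+(s-r)X-X^2\bigr)F_a'=\bigl((r-s)+(2-a(r+s))X\bigr)F_a
\]
in $\F_p[X]$. Comparing the coefficients of $X^{p-1}$ on the two sides — the one term carrying a factor $p$ vanishes, and the two contributions involving the coefficient of $X^{p-1}$ in $F_a$ cancel — reduces this identity to $a(r+s)\cdot\bigl[X^{p-2}\bigr]F_a=0$ in $\F_p$. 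Since $a\not\equiv0\pmod p$ and $r+s\not\equiv0\pmod p$ (the latter being the hypothesis $r+s\neq p$), it follows that $\bigl[X^{p-2}\bigr]F_a=0$, i.e.\ $D(a)=0$.

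As $a$ runs over the positive integers not divisible by $p$, its residue modulo $p$ runs over all of $\F_p^\ast$, so $D$ vanishes at $p-1$ distinct points while having degree at most $p-2$; hence $D$ is the zero polynomial, and the lemma follows. The points requiring care are purely computational: keeping track of the powers of $r$ and $s$ in the first reduction, and checking that the displayed differential equation and the subsequent coefficient comparison are genuine identities in $\F_p[X]$ rather than formal manipulations — both are short computations, and I anticipate no real obstacle. If one would rather not invoke the degree bound, the remaining residue $a\equiv0$ can also be handled directly: at $\alpha=0$ one has $(1+X/r)_\ast^{-1}(1-X/s)_\ast^{-1}=\bigl(\sum_j(-X/r)^j\bigr)\bigl(\sum_k(X/s)^k\bigr)$, whose coefficient of $X^{p-2}$ is $s^{2-p}\sum_{j=0}^{p-2}(-s/r)^j$, and this vanishes since $-s/r\neq1$ (again because $r+s\neq p$) while $(-s/r)^{p-1}=1$.
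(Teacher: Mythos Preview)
Your proof is correct. The paper's own argument is closely related in spirit but differs in execution: both proofs ultimately exploit that in characteristic $p$ the coefficient of $X^{p-1}$ in any derivative vanishes. The paper differentiates the truncated product $(1+X/r)_\ast^{r\alpha}(1-X/s)_\ast^{s\alpha}$ directly in $\F_p(\alpha)[X]$, using the formula $\frac{d}{dX}(1+X)_\ast^{f}=f(1+X)_\ast^{f-1}+(f^p-f)X^{p-1}$ and tracking (and canceling) the correction terms; this yields the identity for generic $\alpha$ in one stroke. You instead reduce to the vanishing of the polynomial $D(\alpha)=[X^{p-2}]\bigl((1+X/r)_\ast^{r\alpha-1}(1-X/s)_\ast^{s\alpha-1}\bigr)$ of degree at most $p-2$, specialize $\alpha$ to integers $a\not\equiv0$ so as to work with the honest polynomial $F_a$ and its first-order ODE, and then interpolate. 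Your route avoids the truncated-derivative correction terms at the cost of the extra specialization-plus-degree step; the paper's route is a one-shot identity but needs the slightly more delicate truncated calculus. Both are clean, and your coefficient comparison (the $p$-term vanishing and the $[X^{p-1}]F_a$ contributions canceling to leave $a(r+s)[X^{p-2}]F_a=0$) checks out.
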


This expression for $b_{r,s}(\alpha)$ would have naturally come up in the proof
of Lemma~\ref{lemma:product}, had we compared the coefficients of $X^{0}$ rather than $X^{p-1}$ in the general case $r+s\neq p$ as well.

\begin{proof}
The lemma claims that $b_{r,s}(\alpha)$ equals the coefficient of $X^{p-1}$ in the polynomial
$(1+X/r)_\ast^{r\alpha-1}(1-X/s)_\ast^{s\alpha}$,
rather than in the polynomial
$(1+X/r)_\ast^{r\alpha-1}(1-X/s)_\ast^{s\alpha-1}$
as in the defining Equation~\eqref{eq:b(ra,sa)}.
Hence it suffices to show that the coefficient of $X^{p-1}$ is the same in those two polynomials.

We start with the polynomial
$(1+X/r)_\ast^{r\alpha}(1-X/s)_\ast^{s\alpha}$
and note that its derivative (with respect to $X$) has no term of degree $p-1$
(because the derivative of $X^p$ vanishes).
The derivative of this product equals
\begin{align*}
&
\bigl(\alpha(1+X/r)_\ast^{r\alpha-1}+(\alpha^p-\alpha)X^{p-1}\bigr)
\cdot(1-X/s)_\ast^{s\alpha}
\\&\qquad
-(1+X/r)_\ast^{r\alpha}
\cdot\bigl(\alpha(1-X/s)_\ast^{s\alpha-1}+(\alpha^p-\alpha)X^{p-1}\bigr).
\end{align*}
Consequently, there is no term of degree $p-1$ in
\[
\alpha
(1+X/r)_\ast^{r\alpha-1}(1-X/s)_\ast^{s\alpha}
-\alpha
(1+X/r)_\ast^{r\alpha}(1-X/s)_\ast^{s\alpha-1},
\]
which in turn equals
\[
\frac{(r+s)\alpha}{r}\cdot
(1+X/r)_\ast^{r\alpha-1}(1-X/s)_\ast^{s\alpha}
-\frac{(r+s)\alpha}{r}\cdot
(1+X/r)_\ast^{r\alpha-1}(1-X/s)_\ast^{s\alpha-1}.
\]
Because $r+s\neq p$ the desired conclusion follows.
\end{proof}

\section{Proofs of Theorem~\ref{thm:reciprocal} and Theorem~\ref{thm:powers}}\label{sec:proofs_B}

Exploiting several properties of the polynomials $b_{1,s}(\alpha)$ which we have proved in Section~\ref{sec:proofs}
we can finally prove the functional equation for the polynomials $G^{(\alpha)}(X)$ announced in Theorem~\ref{thm:reciprocal}.

\begin{proof}[Proof of Theorem~\ref{thm:reciprocal}]
We have
\begin{align*}
\frac{X^{p}}{\alpha^{p-1}-1}G^{(-\alpha)}\left(\frac{1-\alpha^{p-1}}{X}\right)
&=
\sum_{k=1}^{p-1}\frac{X^{p-k}}{k}
\frac{(1-\alpha^{p-1})^{k-1}}{\prod_{s=1}^{k-1}b_{1,s}(-\alpha)}
\\&=
\sum_{k=1}^{p-1}\left(\prod_{s=1}^{k-1}b_{1,s}(\alpha)\right)\frac{X^{p-k}}{k}
&&\text{by Lemma~\ref{lemma:b(a)b(-a)}}
\\&=
-\sum_{k=1}^{p-1}\left(\prod_{s=1}^{p-k-1}b_{1,s}(\alpha)\right)\frac{X^{k}}{k}
\\&=
-\sum_{k=1}^{p-1}\left(\prod_{s=k}^{p-2}b_{1,s}(\alpha)\right)\frac{X^{k}}{k}
&&\text{by Corollary~\ref{cor:symmetry}}
\\&=
-\prod_{r=1}^{p-2}b_{1,r}(\alpha)\cdot\sum_{k=1}^{p-1}\frac{1}{k}
\frac{X^{k}}{\prod_{s=1}^{k-1}b_{1,s}(\alpha)}
\\&=
\prod_{k=2}^{p-1}(1+\alpha/k)^{k-1}\cdot
G^{(\alpha)}(X)
&&\text{by Corollary~\ref{cor:product}.}
\end{align*}
The desired equation follows upon multiplication by
$1-\alpha^{p-1}=\prod_{k=1}^{p-1}(1+\alpha/k)$.
\end{proof}

We conclude this section with proving Theorem~\ref{thm:powers}.

\begin{proof}[Proof of Theorem~\ref{thm:powers}]
Quoting Equation~\eqref{eq:power}, we have
\[
\bigl(L_{p-1}^{(\alpha)}(X)\bigr)^h
\equiv \biggl(\prod_{s=1}^{h-1}b_{1,s}(\alpha)\biggr)\cdot L_{p-1}^{(h\alpha)}\bigl(hX\bigr)
\pmod{X^p-(\alpha^p-\alpha)},
\]
for $0<h<p$.
In essence, we now would like to apply $G^{(h\alpha)}$ to both sides after dividing them by $\prod_{s=1}^{h-1}b_{1,s}(\alpha)$,
and then regard $L_{p-1}^{(\alpha)}(X)$ as the new variable.
However, some care is required to put the change of variable on a rigorous ground,
as we are dealing with congruences.

As in the proof of Corollary~\ref{cor:inverse}, we extend the field of coefficients of the polynomial ring to $\F_p(\alpha^{1/p})$,
so that the modulus $X^p-(\alpha^p-\alpha)$ becomes a $p$th power.
Rename the indeterminate $X$ to $x$, view $L_{p-1}^{(\alpha)}(x)$
as a polynomial in $\F_p(\alpha^{1/p})[x]$,
replace $x$ with $x+\alpha-\alpha^{1/p}$,
and divide both sides by $\prod_{s=1}^{h-1}b_{1,s}(\alpha)$.
The above congruence then reads
\[
\frac{\bigl(L_{p-1}^{(\alpha)}(x+\alpha-\alpha^{1/p})\bigr)^h}{\prod_{s=1}^{h-1}b_{1,s}(\alpha)}
\equiv L_{p-1}^{(h\alpha)}\bigl(h(x+\alpha-\alpha^{1/p})\bigr)
\pmod{x^p}
\]
in the polynomial ring $\F_p(\alpha^{1/p})[x]$, but we rather view it in the power series ring $\F_p(\alpha^{1/p})[[x]]$.
In this setting we can apply the change of indeterminate (or uniformizing parameter)
\[
\delta-X=L_{p-1}^{(\alpha)}(x+\alpha-\alpha^{1/p}),
\]
where $\delta=L_{p-1}^{(\alpha)}(\alpha-\alpha^{1/p})\in\F_p(\alpha^{1/p})$.
According to Theorem~\ref{thm:inverse} we then have
\[
G^{(\alpha)}(\delta-X)\equiv x+\alpha-\alpha^{1/p}\pmod{X^p}
\]
in the power series ring $\F_p(\alpha^{1/p})[[X]]=\F_p(\alpha^{1/p})[[x]]$.

Now we are in a position to apply $G^{(h\alpha)}$ to both sides of our congruence,
and taking Theorem~\ref{thm:inverse} into account (with $h\alpha$ in place of $\alpha$
and $h(x+\alpha-\alpha^{1/p})$ in place of $X$) we get
\[
G^{(h\alpha)}\left(\frac{(\delta-X)^h}{\prod_{s=1}^{h-1}b_{1,s}(\alpha)}\right)
\equiv hG^{(\alpha)}(\delta-X)
\pmod{X^p}
\]
in $\F_p(\alpha^{1/p})[[X]]$.
However, this congruence actually takes place in the polynomial ring $\F_p(\alpha^{1/p})[X]$,
and then replacing $X$ with $\delta-X$ we find
\[
G^{(h\alpha)}\left(\frac{X^h}{\prod_{s=1}^{h-1}b_{1,s}(\alpha)}\right)
\equiv hG^{(\alpha)}(X)
\pmod{X^p-\delta^p},
\]
which takes place in $\F_p(\alpha)[X]$ because $\delta^p=L_{p-1}^{(\alpha^p)}(\alpha^p-\alpha)$, and is the desired conclusion.
\end{proof}

\section{A connection with Jacobi polynomials}\label{sec:Jacobi}

In this section we discuss how our polynomials $b_{r,s}(\alpha)$ of Equation~\eqref{eq:b(ra,sa)}
relate to certain Jacobi polynomials viewed modulo $p$.
Standard definitions and properties of Jacobi polynomials can be found in the the classical book by Szeg\"o on orthogonal polynomials~\cite{Szego:orthogonal}, but
readers should note that purely combinatorial proofs of those properties were presented in~\cite{LerouxStrehl}.

The classical $n$-th Jacobi polynomial $P_{n}^{(\alpha, \beta)}(x)$, for $n$ a non-negative integer, are given by
\[
P_{n}^{(\alpha, \beta)}(x)=\frac{1}{2^n}\sum_{k=0}^{n}\binom{\alpha+n}{n-k}\binom{\beta+n}{k}(x+1)^{n-k}(x-1)^{k}.
\]
Here $\alpha$ and $\beta$ are parameters, but because
$P_{n}^{(\alpha,\beta)}(x)$ depends polynomially on them we may also view it as a polynomial in three indeterminates,
$P_{n}^{(\alpha,\beta)}(x)\in\Q[\alpha,\beta,x]$.

For $p$ an odd prime, the coefficients of the polynomials $P_{p-1}^{(\alpha, \beta)}(x)$ are $p$-integral rational numbers,
and hence they may be viewed modulo $p$.
Because $2^{p-1}\equiv 1\pmod{p}$ we have
\[
P_{p-1}^{(\alpha, \beta)}(x)\equiv \sum_{k=0}^{p-1}\binom{\alpha-1}{p-1-k}\binom{\beta-1}{k}(x+1)^{p-1-k}(x-1)^{k} \pmod{p}.
\]
Setting $x=(s-r)/(s+r)$ for $0<r,s<p$ with $r+s\neq p$ we find
\[
b_{r,s}(\alpha)
\equiv
P_{p-1}^{(r\alpha, s\alpha)}\bigl((s-r)/(s+r)\bigr)
\pmod{p},
\]
where we are slightly abusing notation as the right-hand side is a polynomial with $p$-integral coefficients
while $b_{r,s}(\alpha)$ has coefficients in $\F_p$ by our definition.
Keeping the same abuse of notation we will now exploit this connection to recover some of the properties of the polynomials $b_{r,s}(\alpha)$.

Beyond the obvious identity
$
P_{n}^{(\alpha, \beta)}(x)=(-1)^{n} P_{n}^{(\beta, \alpha)}(-x),
$
which yields the equally obvious symmetry
$
b_{r,s}(\alpha)=b_{s,r}(\alpha),
$
Jacobi polynomials satisfy
\[
(2p+\alpha +\beta)\cdot\frac{x+1}{2}\cdot P_{p-1}^{(\alpha, \beta +1)}(x)=(p+\beta)P_{p-1}^{(\alpha, \beta)}(x)+pP_{p}^{(\alpha, \beta)}(x)
\]
for all positive integers $p$.
This identity is~\cite[Equation~(4.5.4)]{Szego:orthogonal}, where we have written $p-1$ in place of the customary $n$.
With $p$ an odd prime as in the present context, note that the coefficients of $P_{p}^{(\alpha, \beta)}(x)$ are not all $p$-integral,
but those of $pP_{p}^{(\alpha, \beta)}(x)$ are, and in fact
\begin{align*}
pP_{p}^{(\alpha, \beta)}(x)
&=
\frac{p}{2^p} \sum_{k=0}^{p}\binom{\alpha+p}{p-k}\binom{\beta+p}{k}(x+1)^{p-k}(x-1)^{k}
\\&\equiv
\frac{p}{2}\binom{\alpha+p}{p}(x+1)^{p} +\frac{p}{2}\binom{\beta+p}{p}(x-1)^{p}
\pmod{p}
\\
&\equiv \tfrac{1}{2}(\alpha -\alpha^{p})(x+1)^{p} +\tfrac{1}{2}(\beta -\beta^{p})(x-1)^{p}
\pmod{p}.
\end{align*}
Setting $x=(s-r)/(s+r)$, and replacing $\alpha$ and $\beta$ with $r\alpha$ and $s\alpha$, respectively, we find
\[
P_{p-1}^{(r \alpha, s \alpha+1)}\left(\frac{s-r}{s+r}\right) \equiv
P_{p-1}^{(r \alpha, s \alpha)}\left(\frac{s-r}{s+r}\right) \pmod{p},
\]
whose left-hand side corresponds to the alternate expression for $b_{r,s}(\alpha)$ which we found in Lemma~\ref{lemma:b(ra,sa)_alt}.

Finally, Jacobi polynomials satisfy
\[
P_{n}^{(\alpha, \beta)}(x)= \left(\frac{x+1}{2}\right)^{n}  P_{n}^{(\alpha, -2n -\alpha -\beta -1)}\left( \frac{3-x}{x+1}\right),
\]
which is~\cite[Equation~(4.22.1)]{Szego:orthogonal}.
With the appropriate specializations as above this yields the first of the following congruences
\[
P_{p-1}^{( \alpha, s \alpha)}\left(\frac{s-1}{s+1}\right) \equiv
P_{p-1}^{( \alpha, (-s-1) \alpha+1)}\left(\frac{s+2}{s}\right) \equiv
P_{p-1}^{( \alpha, (-s-1) \alpha)}\left(\frac{s+2}{s}\right)   \pmod{p},
\]
and the second one is the one found above corresponding to Lemma~\ref{lemma:b(ra,sa)_alt}.
Altogether, we have recovered that
$
b_{1,s}(\alpha)=b_{1,-1-s}(\alpha)
$
for $0<s<p-1$, which is our Corollary~\ref{cor:symmetry}.

\bibliography{References}

\def\cprime{$'$} \def\polhk#1{\setbox0=\hbox{#1}{\ooalign{\hidewidth
  \lower1.5ex\hbox{`}\hidewidth\crcr\unhbox0}}}
\providecommand{\bysame}{\leavevmode\hbox to3em{\hrulefill}\thinspace}
\providecommand{\MR}{\relax\ifhmode\unskip\space\fi MR }
% \MRhref is called by the amsart/book/proc definition of \MR.
\providecommand{\MRhref}[2]{%
  \href{http://www.ams.org/mathscinet-getitem?mr=#1}{#2}
}
\providecommand{\href}[2]{#2}
\begin{thebibliography}{AM15b}

\bibitem[AM05]{AviMat:-1}
M.~Avitabile and S.~Mattarei, \emph{Thin {L}ie algebras with diamonds of finite
  and infinite type}, J. Algebra \textbf{293} (2005), no.~1, 34--64.
  \MR{2173965 (2006f:17018)}

\bibitem[AM07]{AviMat:A-Z}
\bysame, \emph{Thin loop algebras of {A}lbert-{Z}assenhaus algebras}, J.
  Algebra \textbf{315} (2007), no.~2, 824--851. \MR{2351896 (2008h:17022)}

\bibitem[AM15a]{AviMat:gradings}
\bysame, \emph{Grading switching for modular non-associative algebras}, Lie
  algebras and related topics, Contemp. Math., vol. 652, Amer. Math. Soc.,
  Providence, RI, 2015, pp.~1--14. \MR{3453046}

\bibitem[AM15b]{AviMat:Laguerre}
\bysame, \emph{Laguerre polynomials of derivations}, Israel J. Math.
  \textbf{205} (2015), no.~1, 109--126. \MR{3314584}

\bibitem[BW82]{BlWil:rank-two}
R.~E. Block and R.~L. Wilson, \emph{The simple {L}ie {$p$}-algebras of rank
  two}, Ann. of Math. (2) \textbf{115} (1982), no.~1, 93--168. \MR{644017
  (83j:17008)}

\bibitem[Kon02]{Kontsevich}
Maxim Kontsevich, \emph{The {$1\frac12$}-logarithm. {A}ppendix to: ``{O}n
  poly(ana)logs. {I}'' [{C}ompositio {M}ath {\bf 130} (2002), no. 2, 161--210;
  {MR}1883818 (2002m:11059)] by {P}. {E}lbaz-{V}incent and {H}. {G}angl},
  Compositio Math. \textbf{130} (2002), no.~2, 211--214. \MR{1884238
  (2002m:11060)}

\bibitem[LS85]{LerouxStrehl}
Pierre Leroux and Volker Strehl, \emph{Jacobi polynomials: combinatorics of the
  basic identities}, Discrete Math. \textbf{57} (1985), no.~1-2, 167--187.
  \MR{816058}

\bibitem[Mat05]{Mat:Artin-Hasse}
S.~Mattarei, \emph{Artin-{H}asse exponentials of derivations}, J. Algebra
  \textbf{294} (2005), no.~1, 1--18. \MR{2171626}

\bibitem[Mat06]{Mat:exponential}
\bysame, \emph{Exponential functions in prime characteristic}, Aequationes
  Math. \textbf{71} (2006), no.~3, 311--317. \MR{2236408 (2007b:39056)}

\bibitem[Mir05]{Mirimanoff}
Dmitry Mirimanoff, \emph{L'\'equation ind\'etermin\'ee {$x^l+y^l+z^l=0$} et le
  crit\'erium de {K}ummer}, J. Reine Angew. Math. \textbf{128} (1905), 45--68.
  \MR{1580644}

\bibitem[MT13]{MatTau:polylog}
Sandro Mattarei and Roberto Tauraso, \emph{Congruences for central binomial
  sums and finite polylogarithms}, J. Number Theory \textbf{133} (2013), no.~1,
  131--157. \MR{2981405}

\bibitem[MT18]{MatTau:truncation}
\bysame, \emph{From generating series to polynomial congruences}, J. Number
  Theory \textbf{182} (2018), 179--205. \MR{3703936}

\bibitem[Pre86]{Premet:Cartan}
A.~A. Premet, \emph{Cartan subalgebras of {L}ie {$p$}-algebras}, Izv. Akad.
  Nauk SSSR Ser. Mat. \textbf{50} (1986), no.~4, 788--800, 878--879.
  \MR{88d:17012}

\bibitem[Sze75]{Szego:orthogonal}
G{\'a}bor Szeg{\H{o}}, \emph{Orthogonal polynomials}, fourth ed., American
  Mathematical Society, Providence, R.I., 1975, American Mathematical Society,
  Colloquium Publications, Vol. XXIII. \MR{0372517 (51 \#8724)}

\bibitem[Win69]{Win:toral}
D.~J. Winter, \emph{On the toral structure of {L}ie {$p$}-algebras}, Acta Math.
  \textbf{123} (1969), 69--81. \MR{0251095 (40 \#4326)}

\end{thebibliography}

\end{document}